\theoremstyle{plain}
\newtheorem{thm}{Theorem}[section]
\newtheorem{lem}[thm]{Lemma}
\newtheorem*{acknowledgements}{Acknowledgements}
\newtheorem*{notations}{Notations and conventions}
\theoremstyle{definition}
\theoremstyle{remark}
\newtheorem{rmk}{Remark}[section]
\numberwithin{equation}{section}
\newcommand{\pref}{\prettyref}
\newcommand{\bhom}{\mathop{\mathbf{hom}}\nolimits}
\newcommand{\bh}{\mathop{\mathbf{h}}\nolimits}
\newcommand{\lb}{\left(}
\newcommand{\rb}{\right)}
\newcommand{\Gr}{\operatorname{Gr}}
\newcommand{\LGr}{\operatorname{LGr}}
\newcommand{\cA}{\mathcal{A}}
\newcommand{\cE}{\mathcal{E}}
\newcommand{\cF}{\mathcal{F}}
\newcommand{\cO}{\mathcal{O}}
\newcommand{\bP}{\mathbb{P}}
\newcommand{\bfk}{\mathbf{k}}
\newcommand{\bfF}{\mathbf{F}}
\newcommand{\bfP}{\mathbf{P}}
\newcommand{\bfQ}{\mathbf{Q}}
\newcommand{\bfV}{\mathbf{V}}
\newcommand{\scrL}{\mathscr{L}}
\newcommand{\scrQ}{\mathscr{Q}}
\newcommand{\scrS}{\mathscr{S}}
\title{Derived equivalences for the flops of type $C_2$ and $A^G_4$ via mutation of semiorthogonal decomposition}
\author[H.~Morimura]{Hayato Morimura}
\address{
Graduate School of Mathematical Sciences,
The University of Tokyo,
3-8-1 Komaba,
Meguro-ku,
Tokyo,
153-8914,
Japan.}
\email{morimura@ms.u-tokyo.ac.jp, hmorimur@sissa.it}
\curraddr{
SISSA,
via Bonomea 265,
34136
Trieste,
Italy.}
\date{}
\begin{document}

\begin{abstract}
We give a new proof of the derived equivalence of a pair of varieties
connected by the flop of type $C_2$  in the list of Kanemitsu
\cite{1812.05392},
which is originally due to Segal \cite{Seg16}.
We also prove the derived equivalence of a pair of varieties
connected by the flop of type $A^G_4$ in the same list.
The latter proof follows that of the derived equivalence of Calabi--Yau 3-folds in Grassmannians
$\text{Gr}(2,5)$
and
$\text{Gr}(3,5)$
by Kapustka and Rampazzo \cite{1711.10231} closely.
\end{abstract}

\maketitle

\section{Introduction}
Let $G$ be a semisimple Lie group and $B$ a Borel subgroup of $G$.
For distinct maximal parabolic subgroups $P$ and $Q$ of $G$ containing $B$,
three homogeneous spaces $G / P$, $G / Q$, and $G / (P \cap Q)$
form the following diagram:
\begin{align*} 
\begin{gathered}
\xymatrix{
&
\bfF \coloneqq G / (P \cap Q)
\ar _{ \varpi _- }[dl]
\ar ^{ \varpi _+ }[dr]
&\\
\bfP \coloneqq G / P
&
&
\bfQ \coloneqq G / Q}
\end{gathered}
\end{align*}
We write the hyperplane classes of $\bfP$ and $\bfQ$
as $h$ and $H$ respectively.
By abuse of notation,
the pull-back to $\bfF$ of the hyperplane classes $h$ and $H$
will be denoted by the same symbol.
The morphisms $\varpi_-$ and $\varpi_+$ are
projective morphisms
whose relative $\cO(1)$ are $\cO(H)$ and $\cO(h)$ respectively.
We consider the diagram
\begin{align} \label{eq:zero-sections_diagram}
\begin{gathered}
\xymatrix{
& \bfF \ar[dl]_{\varpi_-} \ar@{^(->}[d]^\iota \ar[dr]^{\varpi_+} & \\
\bfP \ar@{^(->}[d]_{\iota_-} & \bfV \ar[dl]_{\varphi_-} \ar[dr]^{\varphi_+} & \bfQ \ar@{^(->}[d]^{\iota_+} \\
\bfV-  \ar[dr]^{\phi_-} & & \ar[dl]_{\phi_+} \bfV_+ \\
& \bfV_0
}
\end{gathered}
\end{align}
where
\begin{itemize}
\item
$\bfV_-$ is the total space of $\left( (\varpi_{-})_* \cO(h+H) \right)^\vee$ over $\bfP$,
\item
$\bfV_+$ is the total space of$\left( (\varpi_{+})_* \cO(h+H) \right)^\vee$ over $\bfQ$,
\item
$\bfV$ is the total space of $\cO(-h-H)$ over $\bfF$,
\item
$\iota_-, \iota_+$, and $\iota$ are the zero-sections, 
\item
$\varphi_-$ and $\varphi_+$ are blow-ups of the zero-sections, and
\item
$\phi_-$ and $\phi_+$ are the affinizations
which contract the zero sections.
\end{itemize}
If $\bfV_-$ and $\bfV_+$ have the trivial canonical bundles,
then one expects
from \cite[Conjecture 4.4]{MR1957019}
or \cite[Conjecture 1.2]{Kaw}
that $\bfV_-$ and $\bfV_+$
are derived-equivalent.

When $G$ is the simple Lie group of type $G_2$,
Ueda \cite{Ued} used sequence of mutations of semiorthogonal decompositions of $D^b(\bfV)$
obtained by applying Orlov's theorem \cite{Orl} to the diagram \prettyref{eq:zero-sections_diagram}
to prove the derived equivalence of $\bfV_-$ and $\bfV_+$.
This sequence of mutations in turn follows that
of Kuznetsov \cite{1611.08386} closely.

In this paper,
by using the same method,
we give a new proof to the following theorem,
which is originally due to Segal \cite{Seg16},
where the flop was attributed to Abuaf:

\begin{thm} \label{thm:Deq}
Varieties connected  by the flop of type $C_2$ are derived-equivalent. 
\end{thm}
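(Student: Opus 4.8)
The plan is to run, for the flop of type $C_2$, the mutation argument Ueda \cite{Ued} used for the flop of type $G_2$, which itself follows Kuznetsov \cite{1611.08386}.

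First I would specialise the diagram \prettyref{eq:zero-sections_diagram} to the simple group $G=\mathrm{Sp}_4$ of type $C_2$, so that $\bfP=G/P\cong\bP^3$, $\bfQ=G/Q\cong\LGr(2,4)$ is the three-dimensional quadric, and $\bfF=G/B$ is the isotropic flag variety, which is a $\bP^1$-bundle over $\bfP$ via $\varpi_-$ and over $\bfQ$ via $\varpi_+$. Then I would verify two geometric facts. (i) The pair $(\bfV_-,\bfV_+)$ produced by this $G$ is exactly the pair flopped by the flop of type $C_2$ in the list of \cite{1812.05392}, and both members have trivial canonical bundle: writing $\bfF=\bP_{\bfP}(\cE)$ and $\bfF=\bP_{\bfQ}(\cU)$, with $\cE$ the null-correlation bundle on $\bP^3$ and $\cU$ the tautological rank-two bundle on $\LGr(2,4)$, and chasing the two Euler sequences on $\bfF$, one identifies $\bfV_-$ with the total space of $\cE\otimes\cO(-2h)$ over $\bfP$ and $\bfV_+$ with the total space of $\cU\otimes\cO(-H)$ over $\bfQ$, two $5$-folds with trivial canonical bundle. (ii) Since the total space of $\cO_{\bP(\cF)}(-1)$ over a base $S$ (for $\cF$ of rank two) is the blow-up of the total space of $\cF$ along the zero section $S$, the morphism $\varphi_-$ (resp.\ $\varphi_+$) is the blow-up of $\bfV_-$ (resp.\ $\bfV_+$) along its zero section $\bfP$ (resp.\ $\bfQ$), a smooth centre of codimension $2$, with exceptional divisor $\iota(\bfF)$; moreover $\cO_{\bfV}(\iota(\bfF))|_{\iota(\bfF)}\cong\cO(-h-H)$.

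Next I would write down the two semiorthogonal decompositions of $D^b(\bfV)$ that Orlov's blow-up theorem \cite{Orl} attaches to these contractions. Since the centres have codimension $2$, each contributes a single copy of the derived category of the centre, and, after absorbing an $\cO(-h)$- (resp.\ $\cO(-H)$-) twist into $D^b(\bfP)$ (resp.\ $D^b(\bfQ)$),
\begin{align*}
D^b(\bfV)&=\big\langle\,\iota_*\big(\varpi_-^*D^b(\bfP)\otimes\cO(-H)\big),\ \varphi_-^*D^b(\bfV_-)\,\big\rangle,\\
D^b(\bfV)&=\big\langle\,\iota_*\big(\varpi_+^*D^b(\bfQ)\otimes\cO(-h)\big),\ \varphi_+^*D^b(\bfV_+)\,\big\rangle.
\end{align*}
The aim is to carry the first into the second by a finite chain of mutations. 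To arrange this I would refine the left-hand block of the first by the rectangular Lefschetz collection $D^b(\bP^3)=\langle\cO,\cO(h),\cO(2h),\cO(3h)\rangle$ and the left-hand block of the second by Kapranov's collection $D^b(\bfQ)=\langle\cU,\cO,\cO(H),\cO(2H)\rangle$ (with $\cU$ a spinor bundle of the quadric $\bfQ$), obtaining two full decompositions of $D^b(\bfV)$ whose only pieces other than $\varphi_\pm^*D^b(\bfV_\pm)$ are the objects $\iota_*\cO(ah+bH)$ and $\iota_*(\varpi_+^*\cU\otimes\cO(-h))$. Every $\operatorname{Ext}$-group that arises along the way, and hence every mutation functor, is the cohomology of a line bundle, or of $\cU$ and its twists, on the homogeneous space $\bfF=\mathrm{Sp}_4/B$, which is given by Borel--Weil--Bott together with the two $\bP^1$-bundle pushforwards $R\varpi_{\pm,*}$.

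The heart of the proof, and the step I expect to be the main obstacle, is the explicit mutation chain: one must choose the right sequence of left and right mutations of the objects $\iota_*\cO(ah+bH)$ through one another and through the block $\varphi_\pm^*D^b(\bfV_\pm)$, and verify that the resulting decomposition is literally the one attached to $\varphi_+$, so that comparing the second semiorthogonal factors gives $D^b(\bfV_-)\simeq D^b(\bfV_+)$. This is the $C_2$-analogue of the chain carried out in \cite{1611.08386,Ued} for $G_2$; the delicate points are the bookkeeping of the twists by $h$ and $H$, which on $\bfF$ are related by expressing $H$ through $h$ and the Grothendieck class of either $\bP^1$-bundle structure (equivalently by the two Euler sequences), and the handling of the non-exceptional spinor bundle $\cU$ on $\bfQ$, which must be produced by a mutation via an extension of $\cO(h-H)$ by $\cO(-h)$ on $\bfF$. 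Once this is carried out, $D^b(\bfV_-)\simeq D^b(\bfV_+)$ is exactly the assertion for the flop of type $C_2$.
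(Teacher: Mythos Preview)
Your proposal is correct and follows essentially the same approach as the paper: Orlov's blow-up decomposition on both sides of \eqref{eq:zero-sections_diagram}, refined by Beilinson's collection on $\bP^3$ and Kapranov/Kuznetsov's collection on the quadric $\LGr(2,4)$, followed by an explicit chain of mutations on $D^b(\bfV)$ using $\omega_\bfV\cong\cO(-h-H)$ and Borel--Weil--Bott computations. The mutation chain you flag as the main obstacle is exactly what the paper spells out step by step, including producing the pulled-back spinor bundle $\scrS_\bfF^\vee$ (equivalently your $\varpi_+^*\cU$) via the tautological extension \eqref{eq:cSES}; your description of that extension (of $\cO(h-H)$ by $\cO(-h)$) is the dual of the one the paper uses, a harmless cosmetic difference.
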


The term \emph{the flop of type $C_2$} was introduced in \cite{1812.05392},
where simple K-equivalent maps in dimension at most 8 were classified. 
There are several ways to prove \prettyref{thm:Deq}.
In \cite{Seg16},
Segal showed the derived equivalence by using tilting vector bundles. 
Hara \cite{arXiv:1706.04417} constructed alternative tilting vector bundles and
studied the relation between functors defined by him and Segal. 

The flop of type $A^G_{2r-2}$ is also in the list of Kanemitsu\cite{1812.05392}.
It connects $\bfV_-$ and $\bfV_+$ for $\bfP = \Gr(r-1, 2r-1)$ and $\bfQ = \Gr(r, 2r-1)$.
Similarly, we prove the following theorem:

\begin{thm} \label{thm:Deq2}
Varieties connected by the flop of type $A^G_4$ are derived-equivalent. 
\end{thm}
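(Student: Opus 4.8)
The plan is to mimic the proof of \prettyref{thm:Deq}, changing only the input. As there, one applies Orlov's blow-up theorem \cite{Orl} to the two contractions $\varphi_\pm\colon\bfV\to\bfV_\pm$ of \prettyref{eq:zero-sections_diagram} to produce two semiorthogonal decompositions of $D^b(\bfV)$, and then transforms one into the other by a sequence of mutations. This time $G=SL_5$, so $\bfP=\Gr(2,5)$, $\bfQ=\Gr(3,5)$, and $\bfF$ is the variety of flags $\cU_2\subset\cU_3\subset\bC^5$; it is a $\bP^2$-bundle over $\bfP$ via $\varpi_-$, namely $\bfF=\bP_\bfP(\cQ_2)$ with $\cQ_2=\bC^5/\cU_2$, and a $\bP^2$-bundle over $\bfQ$ via $\varpi_+$, namely $\bfF=\bP_\bfQ(\cU_3^\vee)$. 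From $\cO_\bfF(H)=\cO_\bfF(h)\otimes(\cU_3/\cU_2)^\vee$ one computes $(\varpi_-)_*\cO(h+H)\cong\cO(2h)\otimes\cQ_2^\vee$, a rank-$3$ bundle whose determinant is $\cO(5h)=K_\bfP^{-1}$; hence $\bfV_-$ and, symmetrically, $\bfV_+$ have trivial canonical bundle, as the conjecture requires. The same computation shows that $\varphi_-$ is the blow-up of the zero section $\bfP\subset\bfV_-$, with exceptional divisor $\iota(\bfF)$ and relative $\cO(1)$ equal to $\cO_\bfF(h+H)$, and symmetrically for $\varphi_+$; in particular $\varphi_\pm^*$ are fully faithful, and Orlov's theorem gives
\[
D^b(\bfV)=\bigl\langle\varphi_-^*D^b(\bfV_-),\ \iota_*\bigl(\varpi_-^*D^b(\bfP)\otimes\cO_\bfF(h+H)\bigr),\ \iota_*\bigl(\varpi_-^*D^b(\bfP)\otimes\cO_\bfF(2h+2H)\bigr)\bigr\rangle
\]
together with its analogue for $\varphi_+$, $\varpi_+$, $D^b(\bfQ)$.

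Into each of these decompositions one then substitutes the rectangular Lefschetz decomposition $D^b(\Gr(2,5))=\langle\cB,\cB(1),\dots,\cB(4)\rangle$ with block $\cB=\langle\cO,\cU_2^\vee\rangle$, and, on the $\bfQ$-side, the mirror one with block $\langle\cO,\cQ_3\rangle$ ($\cQ_3=\bC^5/\cU_3$) under $\Gr(3,5)\cong\Gr(2,(\bC^5)^\vee)$. This rewrites the $\bfV_-$-adapted and the $\bfV_+$-adapted decompositions of $D^b(\bfV)$ in the form $\langle\,\cD_\mp,\ \cG_\mp\,\rangle$, where $\cD_-=\varphi_-^*D^b(\bfV_-)$, $\cD_+=\varphi_+^*D^b(\bfV_+)$, and $\cG_\mp$ is the span of twenty exceptional objects supported on $\iota(\bfF)$. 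The heart of the argument --- here following Kapustka--Rampazzo \cite{1711.10231} in place of Kuznetsov \cite{1611.08386} --- is to produce an explicit chain of mutations of $D^b(\bfV)$ carrying the first decomposition to the second. Such a chain transports $\cD_-$ to $\cD_+$ up to equivalence, so together with the full faithfulness of $\varphi_\pm^*$ it yields $D^b(\bfV_-)\simeq\cD_-\simeq\cD_+\simeq D^b(\bfV_+)$. Finally, since $\phi_\pm\colon\bfV_\pm\to\bfV_0$ are proper birational contractions of the zero sections, this equivalence is automatically compatible with the maps to $\bfV_0$, hence is an equivalence of the flopped pair, which proves \prettyref{thm:Deq2}.

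The chain of mutations is governed by the two $\bP^2$-bundle structures $\bP_\bfP(\cQ_2)$ and $\bP_\bfQ(\cU_3^\vee)$ on $\bfF$: the two semiorthogonal decompositions of $D^b(\bfF)$ given by Orlov's projective-bundle formula for $\varpi_-$ and for $\varpi_+$ are themselves linked by an explicit chain of mutations, and, after applying $\iota_*$ and twisting by powers of $\cO_\bfF(h+H)$, the twenty blocks of $\cG_-$ arise from the first of these and those of $\cG_+$ from the second, so that one lifts the chain on $\bfF$ to the required chain on $\bfV$. The step I expect to be the main obstacle is exactly this lifting, together with the mutation on $\bfF$ underlying it: one must compute the relevant cohomology groups, of the shape $\operatorname{Ext}^\bullet_\bfF\bigl(\varpi_-^*\cS^\lambda\cU_2^\vee,\ \varpi_+^*\cS^\mu\cQ_3\otimes\cO_\bfF(j(h+H))\bigr)$, identify the cones of the associated evaluation and co-evaluation morphisms, and check that semiorthogonality survives push-forward along the divisor inclusion $\iota\colon\bfF\hookrightarrow\bfV$, where $\iota_*$ fails to be fully faithful so that nothing is automatic. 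Since $\bfF=SL_5/(P\cap Q)$ is a small homogeneous variety, all of these reduce to Borel--Weil--Bott computations for $SL_5$, and the combinatorics involved is precisely that of Kapustka--Rampazzo; transferring it to the present setting requires little beyond a change of notation.
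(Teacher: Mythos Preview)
Your proposal is correct and takes essentially the same approach as the paper: Orlov's blow-up formula on both sides, the rectangular Lefschetz collections on $\Gr(2,5)$ and $\Gr(3,5)$, and an explicit chain of mutations in $D^b(\bfV)$ following \cite{1711.10231}, with all cohomology computations reduced to Borel--Weil--Bott on $\bfF$. The paper carries this out in full detail---its Lemmas~\ref{lem:L1}--\ref{lem:L6} are precisely the $\bhom_{\cO_\bfV}$ checks you flag as the main obstacle---interleaving local mutations with Serre-functor rotations via $\omega_\bfV \cong \cO_\bfV(-2h-2H)$.
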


Although the proof of \pref{thm:Deq2} is parallel to that of the derived equivalence of Calabi--Yau complete intersections in
$\bfP = \Gr(2, 5)$
and
$\bfQ = \Gr(3, 5)$
defined by global sections of the equivariant vector bundles dual to
$\bfV_-$
and
$\bfV_+$
in \cite[Theorem 5.7]{1711.10231},
we write down a full detail for clarity.
As explained in \cite{Ued},
the derived equivalence obtained in \cite{1711.10231} in turn follows from
\pref{thm:Deq2} using matrix factorizations.

We also give a similar proof of derived equivalences for a Mukai flop and a standard flop.
For a Mukai flop,
Kawamata \cite{Kaw} and Namikawa \cite{Nam} independently showed
the derived equivalence
by using the pull-back and the push-forward
along the fiber product $\bfV_- \times_{\bfV_0} \bfV_+$.
Addington, Donovan, and Meachan \cite{arXiv:1507.02595}
introduced a generalization
of the functor of Kawamata and Namikawa
parametrized by an integer,
and discovered that certain compositions of these functors
give the $\bP$-twist
in the sense of Huybrechts and Thomas \cite{HT}.
They also considered the case of a standard flop,
where the derived equivalence is originally proved
by Bondal and Orlov \cite{alg-geom/9506012}.
Our proof is obtained by proceeding the mutation performed in
\cite{alg-geom/9506012}
and
\cite{arXiv:1507.02595}
a little further in a straightforward way.
Hara \cite{Hara} also studied a Mukai flop
in terms of non-commutative crepant resolutions.

For a standard flop,
Segal \cite{Seg11} showed the derived equivalence
by using
the grade restriction rule
for variation of geometric invariant theory quotients (VGIT)
originally introduced by Hori, Herbst, and Page \cite{0803.2045}.
VGIT method was subsequently developed
by Halpern-Leistner \cite{HL}
and Ballard, Favero, and Katzarkov \cite{arXiv:1203.6643}.
It is an interesting problem to develop this method further
to prove the derived equivalence
for the flop of type $C_2$ and $A^G_4$,
and a Mukai flop.

\begin{notations}
We work over an algebraically closed field $\mathbf{k}$ of characteristic 0 throughout this paper.
All pull-back and push-forward are derived unless otherwise specified.
The complexes underlying $\operatorname{Ext}^\bullet(-,-)$ and $\operatorname{H}^\bullet(-)$ will be denoted by $\bhom(-,-)$ and $\mathbf{h}(-)$ respectively.
\end{notations}

\begin{acknowledgements}
The author would like to express his gratitude to Kazushi Ueda for guidance and encouragement.
The author would like to thank anonymous reviewers for their careful reading of the manuscript and their many suggestions and comments.
The author declare that he has no conflict of interest.
\end{acknowledgements}

\section{flop of type $C_2$}

Let $P$ and $Q$ be the parabolic subgroups of
the simple Lie group $G$ of type $C_2$
associated with the crossed Dynkin diagrams
\dynkin{C}{x*}
and
\dynkin{C}{*x}.
The corresponding homogeneous spaces are
the projective space
$
 \bfP = \bP(V),
$
the Lagrangian Grassmannian
$
 \bfQ = \LGr(V),
$
and the isotropic flag variety
$
 \bfF
  = \bP_\bfP \left( \scrL_\bfP^\perp / \scrL_\bfP \right)
  = \bP_\bfQ \left( \scrS_\bfQ \right).
$
Here $V$ is a $4$-dimensional symplectic vector space,
$\scrL_\bfP^\perp$ is the rank 3 vector bundle
given as the symplectic orthogonal
to the tautological line bundle
$\scrL_\bfP \cong \cO_\bfP(-h)$ on $\bfP$,
and $\scrS_{\bfQ}$ is the tautological rank 2 bundle on $\bfQ$.
Note that $\bfQ$ is also a quadric hypersurface in $\bP^4$.
Tautological sequences on $\bfQ = \LGr(V)$ and $\bfF \cong \bP_\bfQ \lb \scrS_\bfQ \rb$ give
\begin{align} \label{eq:cSES1}
 0 \to \scrS_\bfQ \to \cO_\bfQ \otimes V \to \scrS^\vee_\bfQ \to 0
\end{align}
and
\begin{align} \label{eq:cSES}
 0 \to \cO_\bfF(-h+H) \to \scrS_\bfF^\vee \to \cO_\bfF(h) \to 0,
\end{align}
where
$
 \scrS_\bfF \coloneqq \varpi_+^* \scrS_\bfQ.
$
We have
\begin{align*}
 (\varpi_-)_* \left( \cO_\bfF(H) \right)
 \cong \left( \left( \scrL_\bfP^\perp / \scrL_\bfP \right) \otimes \scrL_\bfP \right)^\vee
\end{align*}
and
\begin{align*}
 (\varpi_+)_* \left( \cO_\bfF(h) \right)
 \cong \scrS_\bfQ^\vee,
\end{align*}
whose determinants are given by
$\cO_\bfP(2h)$ and
$\cO_\bfQ(H)$ respectively.
Since
$\omega_{\bfP} \cong \cO_\bfP(-4h)$,
$\omega_{\bfQ} \cong \cO_\bfQ(-3H)$,
and
$\omega_{\bfF} \cong \cO_\bfF(-2h-2H)$,
we have
$\omega_{\bfV_-} \cong \cO_{\bfV_-}$,
$\omega_{\bfV_+} \cong \cO_{\bfV_+}$,
and
$\omega_{\bfV} \cong \cO_\bfV(-h-H)$.

Recall from \cite{Bei} that 
\begin{align} \label{eq:cEC1}
D^b(\bfP) = \langle \cO_{\bfP}(-2h), \cO_\bfP(-h), \cO_\bfP, \cO_\bfP(h) \rangle,
\end{align}
and from \cite{Kuz} (cf. also \cite{Kap}) that
\begin{align*} 
D^b(\bfQ) = \langle \cO_{\bfQ}(-H), \mathscr{S}_{\bfQ}^{\vee}(-H), \cO_\bfQ, \cO_\bfQ(H) \rangle.
\end{align*}
Since $\varphi_\pm$ are blow-ups along the zero-sections,
it follows from \cite{Orl} that 
\begin{align} \label{eq:cSOD1}
D^b(\bfV) = \langle \iota_* \varpi_-^* D^b(\bfP), \Phi_-(D^b(\bfV_-)) \rangle
\end{align}
and
\begin{align} \label{eq:cSOD2}
D^b(\bfV) = \langle \iota_* \varpi_+^* D^b(\bfQ), \Phi_+(D^b(\bfV_+)) \rangle,
\end{align}
where
\begin{align*}
\Phi_- \coloneqq \left( (-) \otimes \cO_\bfV(H) \right) \circ \varphi_-^* \colon D^b(\bfV_-) \to D^b(\bfV)
\end{align*}
and
\begin{align*}
\Phi_+ \coloneqq \left( (-) \otimes \cO_\bfV(h) \right) \circ \varphi_+^* \colon D^b(\bfV_+) \to D^b(\bfV).
\end{align*}
By abuse of notation,
we use the same symbol for an object of $D^b(\bfF)$
and its image in $D^b(\bfV)$ by the push-forward $\iota_*$.
\prettyref{eq:cEC1} and \prettyref{eq:cSOD1} give
\begin{align*}
D^b(\bfV) = \langle \cO_\bfF(-2h), \cO_\bfF(-h), \cO_\bfF, \cO_\bfF(h), \Phi_-(D^b(\bfV_-)) \rangle.
\end{align*}
Since $\omega_\bfV \cong \cO_\bfV(-h-H)$,
by mutating the first term to the far right,
and then
$\Phi_-(D^b(\bfV_-))$ one step to the right,
we obtain
\begin{align*}
D^b(\bfV) = \langle \cO_\bfF(-h), \cO_\bfF, \cO_\bfF(h), \cO_\bfF(-h+H), \Phi_1(D^b(\bfV_-)) \rangle,
\end{align*}
where
\begin{align*}
\Phi_1 \coloneqq R_{\langle \cO_\bfF(-h+H) \rangle} \circ \Phi_-.
\end{align*}
In the sequel,
we will use the following fact.

\begin{lem} \label{lem:homs}
Given two vector bundles $\cE_\bfF, \cF_\bfF$ on $\bfF$,
if 
$\bh \left( \cE^\vee_\bfF \otimes \cF_\bfF (-h-H) \right) \simeq 0$,
then we have
$\bhom_{\cO_\bfV} \left( \cE_{\bfF}, \cF_{\bfF} \right)
\simeq
\bh \left( \cE^\vee_{\bfF} \otimes \cF_{\bfF} \right)$.
\end{lem}
\begin{proof}
We have
\begin{align*}
\bhom_{\cO_\bfV} \left( \cE_{\bfF}, \cF_{\bfF} \right)
&\simeq
\bhom_{\cO_\bfV} \left( \left\{ \cE_{\bfV}(h+H) \to \cE_{\bfV} \right\} , \cF_{\bfF} \right) \\
&\simeq
\bh \left( \left\{ \cE^\vee_{\bfF} \otimes \cF_{\bfF} \to \cE^\vee_{\bfF} \otimes \cF_{\bfF}(-h-H) \right\} \right) \\
&\simeq
\bh \left( \cE^\vee_{\bfF} \otimes \cF_{\bfF} \right).
\end{align*}
\end{proof}

Note that
the canonical extension of $\cO_\bfF(h)$ by $\cO_\bfF(-h+H)$
associated with
\begin{align*}
\bhom_{\cO_\bfV} \left( \cO_{\bfF}(h), \cO_{\bfF}(-h+H) \right)
&\simeq
\bh \left( \cO_\bfF(-2h+H) \right) \\
&\simeq
\bh \left( (\varpi_+)_* \cO_\bfF(-2h) \otimes \cO_\bfQ(H) \right) \\
&\simeq
\bh \left( \cO_\bfQ [-1] \right) \\
&\simeq
\bfk[-1]
\end{align*}
is given by the short exact sequence \prettyref{eq:cSES}.
By mutating $\cO_\bfF(-h+H)$ one step to the left,
$\cO_{\bfF}(-h)$ to the far right,
and then
$\Phi_1(D^b(\bfV_-))$ one step to the right,
we obtain
\begin{align*}
D^b(\bfV) = \langle \cO_\bfF, \scrS_\bfF^\vee, \cO_\bfF(h), \cO_\bfF(H), \Phi_2(D^b(\bfV_-)) \rangle,
\end{align*}
where
\begin{align*}
\Phi_2 \coloneqq R_{\langle \cO_\bfF(H) \rangle} \circ \Phi_1.
\end{align*}
One can easily see that $\cO_\bfF(h)$ and $\cO_\bfF(H)$ are orthogonal,
so that
\begin{align}
D^b(\bfV) = \langle \cO_\bfF, \scrS_\bfF^\vee, \cO_\bfF(H), \cO_\bfF(h), \Phi_2(D^b(\bfV_-)) \rangle.
\end{align}
By mutating $\Phi_2(D^b(\bfV_-))$ one step to the left,
and then
$\cO_{\bfF}(h)$ to the far left,
we obtain
\begin{align*}
D^b(\bfV) = \langle \cO_\bfF(-H), \cO_\bfF, \scrS_\bfF^\vee, \cO_\bfF(H), \Phi_3(D^b(\bfV_-)) \rangle,
\end{align*}
where
\begin{align*}
\Phi_3 \coloneqq L_{\langle \cO_\bfF(h) \rangle} \circ \Phi_2.
\end{align*}
We have
\begin{align*}
 \bhom_{\cO_\bfV} \lb \cO_\bfF, \scrS_\bfF^\vee \rb
\simeq
\bh \lb \scrS_\bfF^\vee \rb 
\simeq V^\vee,
\end{align*}
and
the dual of \prettyref{eq:cSES1} shows that
the kernel of the evaluation map
$
 \cO_\bfF \otimes V^\vee \to \scrS_\bfF^\vee
$
is $\scrS_\bfF \cong \scrS^\vee_\bfF (-H)$.
By mutating $\scrS_\bfF^\vee$ one step to the left,
we obtain
\begin{align} \label{eq:cSOD4}
D^b(\bfV) = \langle \cO_\bfF(-H), \scrS_{\bfF}^\vee (-H), \cO_\bfF, \cO_\bfF(H), \Phi_3(D^b(\bfV_-)) \rangle.
\end{align}
By comparing \eqref{eq:cSOD4} with \eqref{eq:cSOD2},
we obtain a derived equivalence
\begin{align*} 
\Phi \coloneqq \Phi_+^! \circ \Phi_3 \colon D^b(\bfV_-) \xrightarrow{\sim} D^b(\bfV_+),
\end{align*}
where
\begin{align*}
\Phi_+^!(-) \coloneqq (\varphi_+)_* \circ \left( (-) \otimes \cO_\bfV(-h) \right)
\colon D^b(\bfV) \to D^b(\bfV_+)
\end{align*}
is the left adjoint functor of $\Phi_+$.

\section{flop of type $A^G_4$}
Let $P$ and $Q$ be the parabolic subgroups of the simple Lie group $G$ of type $A_4$ associated with the crossed Dynkin diagrams
\dynkin{A}{*x**} and \dynkin{A}{**x*}.
The corresponding homogeneous spaces are the Grassmannians
$\bfP = \Gr(2, V)$,
$\bfQ = \Gr(3, V)$,
and
the partial flag variety
$\bfF = \bP_\bfP \left( \wedge^2 \scrQ_\bfP^\vee \right) = \bP_\bfQ \left( \wedge^2 \scrS_\bfQ \right)$.
Here $V$ is a $5$-dimensional vector space,
$\scrQ_\bfP^\vee$ is the dual of the universal quotient bundle on $\bfP$,
and
$\scrS_{\bfQ}$ is the tautological rank 3 bundle on $\bfQ$.
We have
\begin{align*}
(\varpi_-)_* \left( \cO_\bfF(H) \right) \cong \wedge^2 \scrQ_\bfP
\end{align*}
and
\begin{align*}
(\varpi_+)_* \left( \cO_\bfF(h) \right) \cong \wedge^2 \scrS_\bfQ^\vee,
\end{align*}
whose determinants are given by $\cO_\bfP(2h)$ and $\cO_\bfQ(2H)$ respectively.
Since $\omega_{\bfP} \cong \cO_\bfP(-5h)$, $\omega_{\bfQ} \cong \cO_\bfQ(-5H)$, and $\omega_{\bfF} \cong \cO_\bfF(-3h-3H)$,
we have $\omega_{\bfV_-} \cong \cO_{\bfV_-}$, $\omega_{\bfV_+} \cong \cO_{\bfV_+}$ and $\omega_{\bfV} \cong \cO_\bfV(-2h-2H)$.

First, we adapt several lemmas in \cite{1711.10231} to our situation.
To distinguish vector bundles which are obtained as a pull-back to $\bfF$ from $\bfP$ or $\bfQ$,
we put tilde on the pull-back from $\bfQ$.
By abuse of notation,
we use the same symbol for an object of $D^b(\bfF)$
and
its image in $D^b(\bfV)$ by the push-forward $\iota_*$.

\begin{lem} \label{lem:L1}
$\bhom_{\cO_\bfV} \lb \widetilde{\scrQ}_\bfF, \cO_{\bfF} \lb h + aH \rb \rb \simeq 0$
for integers $-4 \leq a \leq -2$.
\end{lem}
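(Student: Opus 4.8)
The plan is to descend the computation from $\bfV$ to the flag variety $\bfF$, and then, along $\varpi_+$, to the Grassmannian $\bfQ=\Gr(3,V)$, where it turns into a cohomology vanishing that is visible from Borel--Weil--Bott.

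\emph{Descent to $\bfF$.} Let $p\colon\bfV\to\bfF$ be the bundle projection, split by the zero section $\iota$. In the same spirit as the computations carried out for the flop of type $C_2$, I would use the Koszul resolution of $\widetilde{\scrQ}_\bfF$ in $D^b(\bfV)$: since the conormal bundle of $\bfF$ in $\bfV$ is $\cO_\bfF(h+H)$, this is the two-term complex $\left\{p^*\widetilde{\scrQ}_\bfF(h+H)\to p^*\widetilde{\scrQ}_\bfF\right\}$, with differential multiplication by the section of $\cO_\bfV(-h-H)$ cutting out $\bfF$. Applying $\bhom_{\cO_\bfV}(-,\cO_\bfF(h+aH))$, using adjunction along $\iota$ together with $\iota^*p^*=\mathrm{id}$, and noting that the defining section restricts to $0$ on $\bfF$ so that the induced differential vanishes, one obtains
\[
\bhom_{\cO_\bfV}\bigl(\widetilde{\scrQ}_\bfF,\cO_\bfF(h+aH)\bigr)\;\simeq\;
\bh\bigl(\widetilde{\scrQ}_\bfF^{\vee}(h+aH)\bigr)\,\oplus\,\bh\bigl(\widetilde{\scrQ}_\bfF^{\vee}((a-1)H)\bigr)[-1]
\]
on $\bfF$. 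It thus suffices to show that both summands vanish for $-4\le a\le-2$.

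\emph{Descent to $\bfQ$.} Now I would push forward along $\varpi_+\colon\bfF=\bP_\bfQ(\wedge^2\scrS_\bfQ)\to\bfQ$, the $\bP^2$-bundle for which $\cO_\bfF(h)$ is the relative $\cO(1)$ and $\cO_\bfF(H)=\varpi_+^*\cO_\bfQ(H)$; recall $(\varpi_+)_*\cO_\bfF\cong\cO_\bfQ$ and $(\varpi_+)_*\cO_\bfF(h)\cong\wedge^2\scrS_\bfQ^\vee$, with vanishing higher direct images for both. Since $\widetilde{\scrQ}_\bfF^\vee$ is a pull-back from $\bfQ$ --- say $\widetilde{\scrQ}_\bfF=\varpi_+^*\cE$ --- and the only $h$-twists occurring are $\cO_\bfF$ and $\cO_\bfF(h)$, the projection formula, with no higher-direct-image correction, identifies the two summands with the cohomology on $\bfQ=\Gr(3,V)$ of $\cE^\vee\otimes\wedge^2\scrS_\bfQ^\vee\otimes\cO_\bfQ(aH)$ and of $\cE^\vee\otimes\cO_\bfQ((a-1)H)$ respectively. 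Decomposing the first into irreducible homogeneous bundles by the Littlewood--Richardson rule, one is left with a short list of Schur functors of $\scrS_\bfQ$ and $\scrS_\bfQ^\vee$ twisted by explicit powers of $\cO_\bfQ(H)$.

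\emph{Conclusion.} Finally, for each of $a=-2,-3,-4$ I would check that adding $\rho$ to the highest weight of each bundle on this list yields a weight with a repeated entry, so that all of its cohomology vanishes; this is what Borel--Weil--Bott then delivers. This last step is where the actual content sits --- the two descents are formal and proceed as in the case of the flop of type $C_2$. The only point calling for attention is the bookkeeping of direct images along $\varpi_+$: for the twists relevant to this lemma only $(\varpi_+)_*$ contributes, which is what keeps the descent to $\bfQ$ clean, whereas more negative $h$-twists would also bring in a contribution from $R^2(\varpi_+)_*$.
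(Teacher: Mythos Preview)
Your argument is correct and follows essentially the same route as the paper's: both use the Koszul resolution of $\iota_*\widetilde{\scrQ}_\bfF$ on $\bfV$ to reduce the computation to the acyclicity of $\widetilde{\scrQ}^\vee_\bfF(h+aH)$ and $\widetilde{\scrQ}^\vee_\bfF((a-1)H)$ on $\bfF$, and then conclude via Borel--Weil--Bott. The only differences are cosmetic---the paper does not record that the induced differential vanishes, applies BBW directly on the homogeneous space $\bfF$ for the second term, and cites \cite[Lemma~5.1]{1711.10231} for the first rather than pushing down to $\bfQ$; incidentally, your Littlewood--Richardson step is unnecessary, since $\scrQ^\vee_\bfQ\otimes\wedge^2\scrS^\vee_\bfQ(aH)$ is already an irreducible homogeneous bundle on $\bfQ$.
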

\begin{proof}
We have
\begin{align*}
\bhom_{\cO_\bfV} \lb \widetilde{\scrQ}_\bfF, \cO_{\bfF} \lb h + aH \rb \rb
\simeq
\bh \lb \widetilde{\scrQ}^\vee_\bfF (h+aH) \rb 
\simeq 0,
\end{align*}
where the first and the second isomorphisms follow from \pref{lem:homs}, Borel-Bott-Weil theorem and \cite[Lemma 5.1]{1711.10231} respectively.
\end{proof}

Similarly, one can deduce \pref{lem:L2} and \pref{lem:L3} below from \cite[Lemma 5.2, Lemma 5.3]{1711.10231} by checking that
$\cO_\bfF \lb (a-1)H \rb$,
$\cE^\vee_\bfF \otimes \cE^\prime_\bfF \lb (a-1)h-2H \rb$,
and
$\widetilde{\cF}^\vee_\bfF \otimes \widetilde{\cF}^\prime_\bfF \lb -2h+(a-1)H \rb$
are acyclic as an object of $D^b (\bfF)$.

\begin{lem} \label{lem:L2}
$\bhom_{\cO_\bfV} \lb \cO_\bfF, \cO_{\bfF} \lb h + aH \rb \rb \simeq 0$ for integers $-3 \leq a \leq -1$.
\end{lem}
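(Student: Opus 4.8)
The plan is to follow the proof of \pref{lem:L1} essentially verbatim, the only new ingredient being an acyclicity check on $\bfF$. As there, I would begin by resolving $\iota_* \cO_\bfF$ on $\bfV$ by the Koszul complex $\{ \cO_\bfV(h+H) \to \cO_\bfV \}$ attached to the zero section of the line bundle $\cO_\bfV(-h-H)$; this is the same resolution already used in Section~2 and in \pref{lem:L1}. Dualizing and using the projection formula then gives
\[
\bhom_{\cO_\bfV}\bigl( \cO_\bfF, \cO_\bfF(h+aH) \bigr)
\simeq \mathbf{h}\bigl( \{ \cO_\bfF(h+aH) \to \cO_\bfF((a-1)H) \} \bigr),
\]
with $\cO_\bfF(h+aH)$ in cohomological degree $0$ and $\cO_\bfF((a-1)H)$ in degree $1$, so that it suffices to prove that \emph{both} terms of this complex are acyclic on $\bfF$ for $-3 \leq a \leq -1$.

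For the first term I would simply invoke \cite[Lemma 5.2]{1711.10231}: here $\bfF$ is the flag variety $\mathrm{Fl}(2,3;5)$, which is exactly the one appearing in that paper, and $\cO_\bfF(h+aH)$ is acyclic in the stated range. (If a self-contained argument is preferred, push forward along the $\bP^2$-bundle $\varpi_- \colon \bfF \to \bfP$, whose relative $\cO(1)$ is $\cO_\bfF(H)$: one has $R\varpi_{-*}\cO_\bfF(aH) = 0$ for $a \in \{-1,-2\}$, while for $a=-3$ one gets, after twisting by $\cO_\bfP(h)$, a single locally free sheaf on $\bfP = \Gr(2,V)$ that is acyclic by Borel--Bott--Weil.) For the correction term, note that the class $H$ on $\bfF$ is the pull-back of the Plücker class of $\bfQ = \Gr(3,V)$ along $\varpi_+$, and that $\varpi_+ \colon \bfF = \bP_\bfQ(\wedge^2 \scrS_\bfQ) \to \bfQ$ is a $\bP^2$-bundle; hence $R\varpi_{+*}\cO_\bfF((a-1)H) \simeq \cO_\bfQ((a-1)H)$. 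For $-3 \leq a \leq -1$ the exponent $a-1$ lies in $\{-4,-3,-2\}$, and $\cO_\bfQ(bH)$ is acyclic on $\Gr(3,V)$ for $-4 \leq b \leq -1$ by Borel--Bott--Weil, which finishes the argument.

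The only genuinely new point compared with \cite{1711.10231} is the acyclicity of the correction term $\cO_\bfF((a-1)H)$; this is precisely the check flagged in the paragraph preceding \pref{lem:L2}, and it is forced on us because our ambient $\bfV$ is the total space of a line bundle rather than the matrix-factorization ambient used in \cite{1711.10231}. I therefore expect no real obstacle: once the Koszul resolution is set up correctly (the twists being dictated by the analogous step in Section~2), everything reduces to the two short Borel--Bott--Weil vanishings above.
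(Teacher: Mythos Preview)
Your proposal is correct and follows the paper's approach essentially verbatim: the paper states that \pref{lem:L2} is deduced from \cite[Lemma~5.2]{1711.10231} exactly by resolving $\iota_*\cO_\bfF$ via the Koszul complex $\{\cO_\bfV(h+H)\to\cO_\bfV\}$ and then checking that the extra term $\cO_\bfF((a-1)H)$ is acyclic on $\bfF$, which is precisely the computation you carry out. Your parenthetical self-contained BBW arguments for both terms are also fine and simply unpack what \cite[Lemma~5.2]{1711.10231} and the acyclicity check amount to.
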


\begin{lem} \label{lem:L3}
Let $\cE_\bfF, \cE^\prime_\bfF$ be the pull-back to $\bfF$ of vector bundles $\cE, \cE^\prime$ on $\bfP$,
and let $\widetilde{\cF_\bfF}, \widetilde{\cF^\prime_\bfF}$ be the pull-back to $\bfF$ of vector bundles $\cF, \cF^\prime$ on $\bfQ$.
Then we have $\bhom_{\cO_\bfV} \lb \cE_\bfF, \cE^\prime_\bfF \lb ah - H \rb \rb \simeq 0$
and $\bhom_{\cO_\bfV} \lb \widetilde{\cF_\bfF}, \widetilde{\cF^\prime_\bfF} \lb -h + aH \rb \rb \simeq 0$ for all integers $a$.
\end{lem}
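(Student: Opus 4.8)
The plan is to mimic the proof of \pref{lem:L1} word for word, with the Borel--Bott--Weil vanishing there replaced by the fact that $\varpi_-$ and $\varpi_+$ are $\bP^2$-bundles. Since $\bfV$ is the total space of the line bundle $\cO_\bfF(-h-H)$ over $\bfF$ and $\iota$ is the zero section, the Koszul resolution of the zero section, tensored with the pull-back $\cE_\bfV$ of $\cE_\bfF$ to $\bfV$, gives
\begin{align}
\iota_* \cE_\bfF \simeq \left\{ \cE_\bfV(h+H) \to \cE_\bfV \right\}
\end{align}
in $D^b(\bfV)$, just as in the proof of \pref{lem:L1}. Applying $\bhom_{\cO_\bfV}(-, \cE^\prime_\bfF(ah-H))$ and using the adjunction for the closed immersion $\iota$ together with the projection formula, exactly as there, I would reduce to
\begin{align}
\bhom_{\cO_\bfV} \lb \cE_\bfF, \cE^\prime_\bfF(ah-H) \rb
\simeq
\mathbf{h} \lb \left\{ \cE^\vee_\bfF \otimes \cE^\prime_\bfF(ah-H) \to \cE^\vee_\bfF \otimes \cE^\prime_\bfF \lb (a-1)h-2H \rb \right\} \rb .
\end{align}

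It then remains to check that both terms of the right-hand complex are acyclic on $\bfF$: the first term is \cite[Lemma 5.3]{1711.10231} transported to our setting, and the second term is the one singled out in the discussion before \pref{lem:L2}. Both vanishings have the same origin. Write $\varpi_- \colon \bfF = \bP_\bfP \lb \wedge^2 \scrQ^\vee_\bfP \rb \to \bfP$; since $\wedge^2 \scrQ^\vee_\bfP$ has rank $3$ this is a $\bP^2$-bundle, and by hypothesis $\cO_\bfF(H)$ is its relative $\cO(1)$, while $\cO_\bfF(h) = \varpi_-^* \cO_\bfP(h)$. Hence $\cE^\vee_\bfF \otimes \cE^\prime_\bfF(ah-H)$ is $\varpi_-^* \lb \cE^\vee \otimes \cE^\prime(ah) \rb$ twisted by the relative $\cO(-1)$, and $\cE^\vee_\bfF \otimes \cE^\prime_\bfF((a-1)h-2H)$ is a pull-back from $\bfP$ twisted by the relative $\cO(-2)$. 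Since $\cO(-1)$ and $\cO(-2)$ have no cohomology on $\bP^2$, one has $R(\varpi_-)_* \cO_\bfF(-H) = R(\varpi_-)_* \cO_\bfF(-2H) = 0$, so the projection formula kills the $\varpi_-$-pushforward of each term; both terms are therefore acyclic and $\bhom_{\cO_\bfV}(\cE_\bfF, \cE^\prime_\bfF(ah-H)) \simeq 0$.

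The second assertion is obtained by repeating this with $\varpi_+ \colon \bfF = \bP_\bfQ \lb \wedge^2 \scrS_\bfQ \rb \to \bfQ$ in place of $\varpi_-$; this is again a $\bP^2$-bundle because $\wedge^2 \scrS_\bfQ$ has rank $3$, and now $\cO_\bfF(h)$ is its relative $\cO(1)$ while $\cO_\bfF(H) = \varpi_+^* \cO_\bfQ(H)$. The two terms of the complex computing $\bhom_{\cO_\bfV}(\widetilde{\cF}_\bfF, \widetilde{\cF}^\prime_\bfF(-h+aH))$ are then $\widetilde{\cF}^\vee_\bfF \otimes \widetilde{\cF}^\prime_\bfF(-h+aH)$ and $\widetilde{\cF}^\vee_\bfF \otimes \widetilde{\cF}^\prime_\bfF(-2h+(a-1)H)$, which are pull-backs from $\bfQ$ twisted by the relative $\cO(-1)$ and $\cO(-2)$ respectively, hence acyclic by the same vanishing on $\bP^2$.

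No real obstacle arises; the only thing to be careful about is the bookkeeping of which divisor classes on $\bfF$ are the $\varpi_\pm$-relative hyperplane classes and which are pulled back from $\bfP$ or $\bfQ$, so that the twists along the $\bP^2$-bundles are $\cO(-1)$ or $\cO(-2)$ rather than $\cO(-3)$, for which $\operatorname{H}^2$ of the fibre survives. Unlike in \pref{lem:L1} and \pref{lem:L2}, no restriction on $a$ is needed here, precisely because each term of the complex vanishes for a reason that does not involve $a$.
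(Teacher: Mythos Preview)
Your argument is correct and matches the paper's approach: the paper reduces to the same two-term complex via the Koszul resolution of $\iota$, cites \cite[Lemma 5.3]{1711.10231} for the acyclicity of $\cE^\vee_\bfF \otimes \cE^\prime_\bfF(ah-H)$ (respectively $\widetilde{\cF}^\vee_\bfF \otimes \widetilde{\cF}^\prime_\bfF(-h+aH)$), and singles out the remaining term $\cE^\vee_\bfF \otimes \cE^\prime_\bfF((a-1)h-2H)$ (respectively $\widetilde{\cF}^\vee_\bfF \otimes \widetilde{\cF}^\prime_\bfF(-2h+(a-1)H)$) as the one to be checked directly. Your $\bP^2$-bundle argument via $R(\varpi_\pm)_*\cO(-1)=R(\varpi_\pm)_*\cO(-2)=0$ is exactly the mechanism behind both vanishings, so you have simply spelled out what the paper leaves implicit.
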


The parallel result to the following lemma was tacitly used in \cite{1711.10231}.

\begin{lem} \label{lem:L4}
As an object of $D^b (\bfV)$,
$\cO_\bfF, \widetilde{\scrQ}_\bfF, \scrS_\bfF$, and $\scrS^\vee_\bfF$
are left orthogonal to
$\widetilde{\scrS}^\vee_\bfF \lb h - 2H \rb, \\
\widetilde{\scrS^\vee_\bfF} \lb h-2H \rb, \cO_\bfF \lb 2h-2H \rb$, and $\scrQ_\bfF$
respectively.
\end{lem}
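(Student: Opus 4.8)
The four assertions say that $\bhom_{\cO_\bfV}(\iota_*\cB,\iota_*\cA)\simeq 0$, with $(\cA,\cB)$ running over the pairs $(\cO_\bfF,\ \widetilde{\scrS}^\vee_\bfF(h-2H))$, $(\widetilde{\scrQ}_\bfF,\ \widetilde{\scrS}^\vee_\bfF(h-2H))$, $(\scrS_\bfF,\ \cO_\bfF(2h-2H))$, and $(\scrS^\vee_\bfF,\ \scrQ_\bfF)$. The plan is to handle each one by the device used in the proofs of \pref{lem:L1}, \pref{lem:L2} and \pref{lem:L3}. Since $\iota$ is the zero section of the total space of $\cO_\bfF(-h-H)$, we resolve $\iota_*\cB$ by its Koszul complex $\{\cB_\bfV(h+H)\to\cB_\bfV\}$ on $\bfV$, where $\cB_\bfV$ denotes the pull-back of $\cB$ to $\bfV$; combined with the adjunction $\bhom_{\cO_\bfV}(\cG_\bfV,\iota_*\cA)\simeq\bhom_{\cO_\bfF}(\cG,\cA)$, valid for any $\cG$ on $\bfF$, this identifies $\bhom_{\cO_\bfV}(\iota_*\cB,\iota_*\cA)$ with the two-term complex $\{\bhom_{\cO_\bfF}(\cB,\cA)\to\bhom_{\cO_\bfF}(\cB,\cA(-h-H))\}$. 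Hence in each case it suffices to show that the bundles $\cB^\vee\otimes\cA$ and $\cB^\vee\otimes\cA(-h-H)$ are acyclic on $\bfF$.

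For the first two pairs, after this reduction each of $\cB^\vee\otimes\cA$ and $\cB^\vee\otimes\cA(-h-H)$ is the tensor product of a vector bundle pulled back from $\bfQ$ along $\varpi_+$ with $\cO_\bfF(-h)$, respectively with $\cO_\bfF(-2h)$. As $\varpi_+\colon\bfF=\bP_\bfQ(\wedge^2\scrS_\bfQ)\to\bfQ$ is a $\bP^2$-bundle with $(\varpi_+)_*\cO_\bfF(h)\cong\wedge^2\scrS^\vee_\bfQ$, the direct images $(\varpi_+)_*\cO_\bfF(-h)$ and $(\varpi_+)_*\cO_\bfF(-2h)$ both vanish; the projection formula then makes the four bundles in question acyclic, and the first two assertions follow immediately.

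For the last two pairs $\cB$ is pulled back from $\bfP=\Gr(2,V)$ along $\varpi_-\colon\bfF=\bP_\bfP(\wedge^2\scrQ^\vee_\bfP)\to\bfP$, again a $\bP^2$-bundle, with $(\varpi_-)_*\cO_\bfF(H)\cong\wedge^2\scrQ_\bfP$ and $(\varpi_-)_*\cO_\bfF(2H)\cong S^2(\wedge^2\scrQ_\bfP)$; here $\scrS_\bfF=\varpi_-^*\scrS_\bfP$ and $\scrQ_\bfF=\varpi_-^*\scrQ_\bfP$ for the tautological sub- and quotient bundles $\scrS_\bfP,\scrQ_\bfP$ on $\bfP$. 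Pushing forward along $\varpi_-$ and using $(\varpi_-)_*\cO_\bfF(-H)\simeq 0$, the fourth pair reduces to the acyclicity on $\Gr(2,V)$ of $\scrS^\vee_\bfP\otimes\scrQ^\vee_\bfP$ (its other bundle $\scrS^\vee_\bfF\otimes\scrQ^\vee_\bfF(-h-H)$ acquires the factor $\cO_\bfF(-H)$ and so has vanishing cohomology), while the third pair reduces to the acyclicity of $\scrS_\bfP(-3h)\otimes\wedge^2\scrQ_\bfP$ and of $\scrS_\bfP(-2h)\otimes S^2(\wedge^2\scrQ_\bfP)$. Each of these three bundles is an irreducible homogeneous bundle on $\Gr(2,V)$; applying the Borel--Bott--Weil theorem --- equivalently, \cite[Lemma 5.1]{1711.10231} --- one checks that in each case the weight of the bundle, after adding $\rho$, has a repeated entry, so the cohomology vanishes.

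The argument is formal once the twists are kept track of; the only point requiring care is to choose, in each of the eight acyclicity checks, which of the two projections $\varpi_-,\varpi_+$ to push forward along --- namely the one that turns every tensor factor other than a power of the relative hyperplane class into a pull-back from the base. I do not expect any genuine obstacle, the Borel--Bott--Weil input for the last two pairs being of exactly the same elementary kind as in \pref{lem:L1}, \pref{lem:L2} and \pref{lem:L3}.
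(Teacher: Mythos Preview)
Your overall strategy is exactly what the paper has in mind (the lemma is stated without proof, the implicit argument being the Koszul--resolution--plus--Borel--Bott--Weil routine used for \pref{lem:L1} and \pref{lem:L6}), and every cohomology vanishing you assert is correct.

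There is, however, a genuine slip in the very first line: you have the direction of ``left orthogonal'' reversed. In the standard convention, and as the applications in the mutation sequence confirm, ``$X$ is left orthogonal to $Y$'' means $\bhom(X,Y)\simeq 0$. For instance, when the paper moves $\widetilde{\scrS}^\vee_{1,2}$ to the left of $\cO_{0,4}$, the semiorthogonal decomposition already gives $\bhom(\widetilde{\scrS}^\vee_{1,2},\cO_{0,4})\simeq 0$; the new input needed is $\bhom(\cO_{0,4},\widetilde{\scrS}^\vee_{1,2})\simeq 0$, i.e.\ $\bhom(\cO_\bfF,\widetilde{\scrS}^\vee_\bfF(h-2H))\simeq 0$. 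With your labelling $(\cA,\cB)=(\cO_\bfF,\widetilde{\scrS}^\vee_\bfF(h-2H))$ you compute $\bhom(\cB,\cA)$, which is the already-known direction. The same reversal occurs in all four pairs.

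This matters for your first two pairs: after swapping source and target, the bundles $\cA^\vee\otimes\cB$ and $\cA^\vee\otimes\cB(-h-H)$ carry a factor $\cO_\bfF(h)$ or $\cO_\bfF$ rather than $\cO_\bfF(-h)$ or $\cO_\bfF(-2h)$, so pushing forward along $\varpi_+$ no longer annihilates them for free. One must instead compute, e.g., $\bh_\bfQ\bigl(\scrS^\vee_\bfQ(-2H)\otimes\wedge^2\scrS^\vee_\bfQ\bigr)$ and $\bh_\bfQ\bigl(\scrS^\vee_\bfQ(-3H)\bigr)$ via Borel--Bott--Weil; both vanish (their weights plus $\rho$ have repeated entries), but this is an honest check rather than automatic. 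For the last two pairs the corrected bundles are $\scrS^\vee_\bfF(2h-2H)$, $\scrS^\vee_\bfF(h-3H)$ and $\scrS_\bfF\otimes\scrQ_\bfF$, $\scrS_\bfF\otimes\scrQ_\bfF(-h-H)$; these are handled by the same push-forward-to-$\bfP$ argument you gave, with equally elementary Borel--Bott--Weil input. So the method survives intact once you swap $\cA$ and $\cB$, but the slick ``$(\varpi_+)_*\cO(-h)=0$'' shortcut for the first two pairs does not.
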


\pref{lem:L5} below
and
the tautological sequence show that
$R_{\cO_\bfF} \widetilde{\scrQ}^\vee_\bfF \simeq \widetilde{\scrS}^\vee_\bfF$
and
$R_{\cO_{\bfF}} \scrS_\bfF \simeq \scrQ_\bfF$
in $D^b (\bfV)$.
 
\begin{lem} \label{lem:L5}
$\bhom_{\cO_\bfV} \lb \widetilde{\scrQ}^\vee_\bfF, \cO_\bfF \rb \simeq V$
and
$\bhom_{\cO_\bfV} \lb \scrS_\bfF, \cO_\bfF \rb \simeq V$.
\end{lem}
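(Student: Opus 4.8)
The plan is to reduce each $\bhom$ over $\bfV$ to a cohomology group on $\bfF$, and thence to a cohomology group on a Grassmannian, exactly as in the $\bhom$-computations carried out for the flop of type $C_2$. Since $\bfV$ is the total space of $\cO_\bfF(-h-H)$ over $\bfF$, the zero section $\iota$ exhibits $\bfF$ as a Cartier divisor with conormal bundle $\cO_\bfF(h+H)$, so for a vector bundle $\cE$ on $\bfF$ there is a Koszul resolution $\iota_* \cE \simeq \left\{ \cE_\bfV(h+H) \to \cE_\bfV \right\}$ in $D^b(\bfV)$, where $\cE_\bfV$ denotes the pull-back of $\cE$ along $\bfV \to \bfF$. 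Resolving the source in this way and using the adjunction $\bhom_{\cO_\bfV}(-,\iota_*(-)) \simeq \bhom_{\cO_\bfF}(\iota^*(-),-)$ together with $\iota^* \cE_\bfV \simeq \cE$, one obtains
\begin{align}
\bhom_{\cO_\bfV} \lb \widetilde{\scrQ}^\vee_\bfF, \cO_\bfF \rb
&\simeq
\mathbf{h} \lb \left\{ \widetilde{\scrQ}_\bfF \to \widetilde{\scrQ}_\bfF(-h-H) \right\} \rb, \\
\bhom_{\cO_\bfV} \lb \scrS_\bfF, \cO_\bfF \rb
&\simeq
\mathbf{h} \lb \left\{ \scrS^\vee_\bfF \to \scrS^\vee_\bfF(-h-H) \right\} \rb,
\end{align}
where $\widetilde{\scrQ}_\bfF$ and $\scrS^\vee_\bfF$ sit in degree $0$.

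Next I would evaluate the two two-term complexes. For their degree-$1$ entries, recall that $\cO_\bfF(h)$ and $\cO_\bfF(H)$ are the relative hyperplane classes of the $\bP^2$-bundles $\varpi_+ \colon \bfF \to \bfQ$ and $\varpi_- \colon \bfF \to \bfP$, while $H$ and $h$ are pulled back from $\bfQ$ and $\bfP$ respectively; hence $\widetilde{\scrQ}_\bfF(-h-H) \simeq \varpi_+^* \lb \scrQ_\bfQ(-H) \rb \otimes \cO_\bfF(-h)$ and $\scrS^\vee_\bfF(-h-H) \simeq \varpi_-^* \lb \scrS^\vee_\bfP(-h) \rb \otimes \cO_\bfF(-H)$, and since $(\varpi_+)_* \cO_\bfF(-h) \simeq 0$ and $(\varpi_-)_* \cO_\bfF(-H) \simeq 0$, the projection formula gives $\mathbf{h} \lb \widetilde{\scrQ}_\bfF(-h-H) \rb \simeq 0$ and $\mathbf{h} \lb \scrS^\vee_\bfF(-h-H) \rb \simeq 0$. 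For their degree-$0$ entries, $R(\varpi_\pm)_* \cO_\bfF \simeq \cO$ gives $\mathbf{h} \lb \widetilde{\scrQ}_\bfF \rb \simeq \mathbf{h} \lb \bfQ, \scrQ_\bfQ \rb$ and $\mathbf{h} \lb \scrS^\vee_\bfF \rb \simeq \mathbf{h} \lb \bfP, \scrS^\vee_\bfP \rb$, and the Borel-Bott-Weil theorem identifies both with $V$, concentrated in degree $0$. Hence each two-term complex collapses to $V$ in degree $0$, which is the assertion.

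The only real content is the vanishing of the twisted terms $\mathbf{h} \lb \widetilde{\scrQ}_\bfF(-h-H) \rb$ and $\mathbf{h} \lb \scrS^\vee_\bfF(-h-H) \rb$, which kills the degree-$1$ part of each complex; this is immediate from the relative $\cO(1)$ descriptions of $\varpi_\pm$, so I do not anticipate a genuine obstacle — the computation is the $A^G_4$ analogue of $\bhom_{\cO_\bfV} \lb \cO_\bfF, \scrS^\vee_\bfF \rb \simeq V^\vee$ in the $C_2$ case. Combined with the tautological sequences on $\bfP$ and $\bfQ$, the two isomorphisms then yield the mutations $R_{\cO_\bfF} \widetilde{\scrQ}^\vee_\bfF \simeq \widetilde{\scrS}^\vee_\bfF$ and $R_{\cO_\bfF} \scrS_\bfF \simeq \scrQ_\bfF$.
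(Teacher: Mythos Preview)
Your argument is correct and follows precisely the pattern the paper uses for the neighbouring Lemmas~3.1 and~3.6: resolve the source via the Koszul complex $\{\cE_\bfV(h+H)\to\cE_\bfV\}$ for $\iota_*\cE$, reduce to a two-term complex of cohomology on $\bfF$, kill the twisted term with $(\varpi_\pm)_*\cO_\bfF(-h)=0$ or $(\varpi_\pm)_*\cO_\bfF(-H)=0$, and identify the remaining term by Borel--Bott--Weil. The paper does not write out a proof of this lemma, so what you have done is exactly the intended computation; one minor remark is that $\bh(\scrS^\vee_\bfF)\simeq\bh(\bfP,\scrS^\vee_\bfP)$ is canonically $V^\vee$ rather than $V$, but the paper's statement is equally casual about this and it is irrelevant for the mutation, since the coevaluation map $\scrS_\bfF\to\bhom(\scrS_\bfF,\cO_\bfF)^\vee\otimes\cO_\bfF$ then becomes the tautological inclusion $\scrS_\bfF\hookrightarrow V\otimes\cO_\bfF$ as required.
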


Again,
both \pref{lem:L4} and \pref{lem:L5}
follow from \pref{lem:homs} and Borel-Bott-Weil theorem.
\pref{lem:L6} below
and
the exact sequences
\begin{align*}
0 \to \cO_{\bfF}(h-H) \to \scrQ_{\bfF} \to  \widetilde{\scrQ}_{\bfF} \to 0
\end{align*}
and
\begin{align*}
0 \to \scrS_{\bfF} \to \widetilde{\scrS}_{\bfF} \to  \cO_{\bfF}(h-H) \to 0
\end{align*}
obtained in \cite{1711.10231}
show that
$R_{\cO_{\bfF}(h-H)} \widetilde{\scrQ_{\bfF}} \simeq \scrQ_{\bfF}[1]$
and
$L_{\cO_{\bfF}(-h+H)} \widetilde{\scrS^\vee_\bfF} \simeq \scrS^\vee_\bfF$
in $D^b (\bfV)$.
\begin{lem} \label{lem:L6}
$\bhom_{\cO_\bfV} \lb \widetilde{\scrQ}_\bfF, \cO_\bfF(h -H) \rb \simeq \bfk[-1]$
and
$\bhom_{\cO_\bfV} \lb \cO_\bfF(-h+H), \widetilde{\scrS}^\vee_\bfF \rb \simeq \bfk$.
\end{lem}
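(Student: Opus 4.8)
The plan is to reduce both isomorphisms, via the two short exact sequences recalled just before the statement, to the single computation $\bhom_{\cO_\bfV}(\cO_\bfF, \cO_\bfF) \simeq \bfk$, using \pref{lem:L3} to annihilate the auxiliary term that appears. The point is that $\cO_\bfF(h-H)$ and $\scrQ_\bfF$ (resp.\ $\scrS^\vee_\bfF$) are pulled back from $\bfP$ with an $H$-twist, which is exactly the shape killed by \pref{lem:L3}, so that each Ext complex collapses onto a self-$\bhom$ of a line bundle supported on the zero section.

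For the first isomorphism I push the exact sequence $0 \to \cO_\bfF(h-H) \to \scrQ_\bfF \to \widetilde{\scrQ}_\bfF \to 0$ forward along $\iota$ to obtain a distinguished triangle in $D^b(\bfV)$, and apply the contravariant functor $\bhom_{\cO_\bfV}(-, \cO_\bfF(h-H))$. The term $\bhom_{\cO_\bfV}(\scrQ_\bfF, \cO_\bfF(h-H))$ is of the form $\bhom_{\cO_\bfV}(\cE_\bfF, \cE'_\bfF(ah-H))$ with $\cE = \scrQ_\bfP$, $\cE' = \cO_\bfP$, $a = 1$, hence vanishes by \pref{lem:L3}; rotating the triangle gives $\bhom_{\cO_\bfV}(\widetilde{\scrQ}_\bfF, \cO_\bfF(h-H)) \simeq \bhom_{\cO_\bfV}(\cO_\bfF(h-H), \cO_\bfF(h-H))[-1]$. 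For the second isomorphism I dualize the exact sequence $0 \to \scrS_\bfF \to \widetilde{\scrS}_\bfF \to \cO_\bfF(h-H) \to 0$ of vector bundles, obtaining $0 \to \cO_\bfF(-h+H) \to \widetilde{\scrS}^\vee_\bfF \to \scrS^\vee_\bfF \to 0$, push it forward along $\iota$, and apply $\bhom_{\cO_\bfV}(\cO_\bfF(-h+H), -)$. Now $\bhom_{\cO_\bfV}(\cO_\bfF(-h+H), \scrS^\vee_\bfF) \simeq \bhom_{\cO_\bfV}(\cO_\bfF, \scrS^\vee_\bfF(h-H))$ is again of the form $\bhom_{\cO_\bfV}(\cE_\bfF, \cE'_\bfF(ah-H))$ (with $\cE = \cO_\bfP$, $\cE' = \scrS^\vee_\bfP$, $a = 1$), so it vanishes by \pref{lem:L3} and we get $\bhom_{\cO_\bfV}(\cO_\bfF(-h+H), \widetilde{\scrS}^\vee_\bfF) \simeq \bhom_{\cO_\bfV}(\cO_\bfF(-h+H), \cO_\bfF(-h+H))$.

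Twisting both arguments by a line bundle gives $\bhom_{\cO_\bfV}(\cO_\bfF(h-H), \cO_\bfF(h-H)) \simeq \bhom_{\cO_\bfV}(\cO_\bfF(-h+H), \cO_\bfF(-h+H)) \simeq \bhom_{\cO_\bfV}(\cO_\bfF, \cO_\bfF)$, so it remains to compute the latter. Resolving $\iota_* \cO_\bfF$ by the Koszul complex $\{\cO_\bfV(h+H) \to \cO_\bfV\}$ of the zero section and using the adjunction $\iota^* \dashv \iota_*$, exactly as in the proof of \pref{lem:L1}, reduces this to $\mathbf{h}(\{\cO_\bfF \to \cO_\bfF(-h-H)\})$; here $\mathbf{h}(\cO_\bfF) \simeq \bfk$ since $\bfF$ is a connected projective homogeneous space, and $\mathbf{h}(\cO_\bfF(-h-H)) \simeq 0$ since $(\varpi_+)_* \cO_\bfF(-h) \simeq 0$ for the $\bP^2$-bundle $\varpi_+$, whose relative $\cO(1)$ is $\cO(h)$. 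Substituting back yields $\bhom_{\cO_\bfV}(\widetilde{\scrQ}_\bfF, \cO_\bfF(h -H)) \simeq \bfk[-1]$ and $\bhom_{\cO_\bfV}(\cO_\bfF(-h+H), \widetilde{\scrS}^\vee_\bfF) \simeq \bfk$ as claimed.

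I expect the only genuine subtleties to be bookkeeping: matching the twists so that \pref{lem:L3} applies verbatim, and keeping track of the cohomological shift produced by the Koszul resolution, so that the first $\bhom$ indeed lands in degree $1$ rather than $0$. Alternatively, one can argue entirely in the style of \pref{lem:L1} and \pref{lem:L2}: resolve $\iota_* \widetilde{\scrQ}_\bfF$ (resp.\ $\iota_* \cO_\bfF(-h+H)$) by its Koszul complex, reduce by adjunction to a two-term complex of cohomology groups on $\bfF$, and evaluate each term by Borel--Bott--Weil after pushing forward along $\varpi_+$, using $\wedge^2 \scrS^\vee_\bfF(-H) \simeq \scrS_\bfF$ so that the surviving term becomes $\mathbf{h}(\Omega^1_\bfQ) \simeq \bfk[-1]$ in the first case and an endomorphism bundle with one-dimensional cohomology in the second; the two computations agree, and I would present the shorter argument above.
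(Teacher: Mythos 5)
Your proof is correct, but it takes a genuinely different route from the paper's. The paper proves both isomorphisms head-on: it replaces $\iota_*\widetilde{\scrQ}_\bfF$ (resp.\ $\iota_*\cO_\bfF(-h+H)$) by its two-term Koszul resolution on $\bfV$, reduces by adjunction to the hypercohomology of a two-term complex on $\bfF$, and evaluates $\bh \lb \widetilde{\scrQ}^\vee_\bfF(h-H) \rb \simeq \bfk[-1]$ and $\bh \lb \widetilde{\scrS}^\vee_\bfF(h-H) \rb \simeq \bfk$ by Borel--Bott--Weil --- essentially the ``alternative'' you sketch in your closing paragraph. Your main argument instead feeds the two short exact sequences displayed just before the lemma into the long exact $\bhom_{\cO_\bfV}$-sequence, kills the term $\bhom_{\cO_\bfV}\lb \scrQ_\bfF, \cO_\bfF(h-H) \rb$ (resp.\ $\bhom_{\cO_\bfV}\lb \cO_\bfF(-h+H), \scrS^\vee_\bfF \rb$) by \pref{lem:L3} --- the indices do match its hypotheses, since $\scrQ_\bfF$, $\cO_\bfF(h)$ and $\scrS^\vee_\bfF$ are all pulled back from $\bfP$ and the twist has the form $ah-H$ with $a=1$ --- and collapses everything onto $\bhom_{\cO_\bfV}\lb \cO_\bfF, \cO_\bfF \rb \simeq \bfk$, which you compute correctly (the degree-one term $\bh\lb \cO_\bfF(-h-H) \rb$ vanishes because $\varpi_\pm$ are $\bP^2$-bundles). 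This is not circular: the exact sequences are geometric facts from Kapustka--Rampazzo, independent of \pref{lem:L6}. Your route buys two things: a single trivial cohomology computation in place of two Borel--Bott--Weil evaluations, and --- more usefully --- it exhibits the generator of $\operatorname{Ext}^1\lb \widetilde{\scrQ}_\bfF, \cO_\bfF(h-H) \rb$ as the connecting map of the pushed-forward exact sequence, which is exactly what the subsequent identifications $R_{\cO_\bfF(h-H)}\widetilde{\scrQ}_\bfF \simeq \scrQ_\bfF[1]$ and $L_{\cO_\bfF(-h+H)}\widetilde{\scrS}^\vee_\bfF \simeq \scrS^\vee_\bfF$ require. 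The paper's route, in exchange, is independent of \pref{lem:L3} and of the exact sequences, so the lemma stands on its own.
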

\begin{proof}
We have
\begin{align*}
\bhom_{\cO_\bfV} \lb \widetilde{\scrQ}_\bfF, \cO_\bfF(h-H) \rb
\simeq
\bh \lb \widetilde{\scrQ}^\vee_\bfF(h-H) \rb
\simeq \bfk[-1],
\end{align*}
where the isomorphisms follow from \pref{lem:homs} and Borel-Bott-Weil theorem.
Similarly, we have
\begin{align*}
\bhom_{\cO_\bfV} \lb \cO_\bfF(-h+H), \widetilde{\scrS}^\vee_\bfF \rb
\simeq
\bh \lb \widetilde{\scrS}^\vee_\bfF(h-H) \rb 
\simeq \bfk.
\end{align*}
\end{proof}

Recall from \cite{Kuz} (cf. also \cite{Kap}) 
\begin{align*} 
D^b(\bfP) = \langle \scrS_{\bfP}(-2h), \cO_{\bfP}(-2h), \scrS_{\bfP}(-h), \cO_{\bfP}(-h), \cdots, \scrS_{\bfP} (2h), \cO_{\bfP} (2h) \rangle,
\end{align*}
and
\begin{align} \label{eq:kEC2}
D^b(\bfQ) = \langle \cO_{\bfQ}, \scrQ_{\bfQ}, \cO_{\bfQ} (H), \scrQ_{\bfQ} (H), \cdots, \cO_{\bfQ} (4H), \scrQ_{\bfQ} (4H) \rangle.
\end{align}
Since $\varphi_\pm$ are blow-ups along the zero-sections,
it follows from \cite{Orl} that 
\begin{align} \label{eq:kSOD1}
D^b(\bfV) = \langle \iota_* \varpi_-^* D^b(\bfP), \iota_* \varpi_-^* D^b(\bfP) (h + H), \Phi_-(D^b(\bfV_-)) \rangle
\end{align}
and
\begin{align} \label{eq:kSOD2}
D^b(\bfV) = \langle \iota_* \varpi_+^* D^b(\bfQ), \iota_* \varpi_+^* D^b(\bfQ) (h + H), \Phi_+(D^b(\bfV_+)) \rangle,
\end{align}
where
\begin{align*}
\Phi_- \coloneqq \left( (-) \otimes \cO_\bfV(2H) \right) \circ \varphi_-^* \colon D^b(\bfV_-) \to D^b(\bfV)
\end{align*}
and
\begin{align*}
\Phi_+ \coloneqq \left( (-) \otimes \cO_\bfV(2h) \right) \circ \varphi_+^* \colon D^b(\bfV_+) \to D^b(\bfV).
\end{align*}
We write $\cO_{i,j} \coloneqq \cO_\bfF(i h + j H)$.
\prettyref{eq:kEC2} and \prettyref{eq:kSOD2} give a semiorthogonal decomposition of the form
\begin{align*}
D^b(\bfV) = \langle &\cO_{0,0}, \widetilde{\scrQ}_{0,0}, \cO_{0,1}, \widetilde{\scrQ}_{0,1}, \cO_{0,2}, \widetilde{\scrQ}_{0,2}, \cO_{0,3}, \widetilde{\scrQ}_{0,3}, \cO_{0,4}, \widetilde{\scrQ}_{0,4} \\
&\cO_{1,1}, \widetilde{\scrQ}_{1,1}, \cO_{1,2}, \widetilde{\scrQ}_{1,2}, \cO_{1,3}, \widetilde{\scrQ}_{1,3}, \cO_{1,4}, \widetilde{\scrQ}_{1,4}, \cO_{1,5}, \widetilde{\scrQ}_{1,5}, \Phi_+(D^b(\bfV_+)) \rangle.
\end{align*}
Since $\omega_\bfV \cong \cO_\bfV(-2h-2H)$,
by mutating the first five terms to the far right, and then $\Phi_+(D^b(\bfV_+))$ five steps to the right, we obtain
\begin{align*}
D^b(\bfV) = \langle &\widetilde{\scrQ}_{0,2}, \cO_{0,3}, \widetilde{\scrQ}_{0,3}, \cO_{0,4}, \widetilde{\scrQ}_{0,4}, \cO_{1,1}, \widetilde{\scrQ}_{1,1}, \cO_{1,2}, \widetilde{\scrQ}_{1,2}, \cO_{1,3} \\
&\widetilde{\scrQ}_{1,3}, \cO_{1,4}, \widetilde{\scrQ}_{1,4}, \cO_{1,5}, \widetilde{\scrQ}_{1,5}, \cO_{2,2}, \widetilde{\scrQ}_{2,2}, \cO_{2,3}, \widetilde{\scrQ}_{2,3}, \cO_{2,4}, \Phi_1(D^b(\bfV_+)) \rangle,
\end{align*}
where
\begin{align*}
\Phi_1 \coloneqq R_{\langle \cO_{2,2}, \widetilde{\scrQ}_{2,2}, \cO_{2,3}, \widetilde{\scrQ}_{2,3}, \cO_{2,4} \rangle} \circ \Phi_+.
\end{align*}
One can easily see that
$\cO_{1,1}$ is orthogonal to $\cO_{0,3}$,
$\widetilde{\scrQ}_{0,3}$,
$\cO_{0,4}$,
and
$\widetilde{\scrQ}_{0,4}$
by \pref{lem:L1} and \pref{lem:L2},
so that
\begin{align*}
D^b(\bfV) = \langle &\widetilde{\scrQ}_{0,2}, \cO_{1,1}, \cO_{0,3}, \widetilde{\scrQ}_{0,3}, \cO_{0,4}, \widetilde{\scrQ}_{0,4}, \widetilde{\scrQ}_{1,1}, \cO_{1,2}, \widetilde{\scrQ}_{1,2}, \cO_{1,3} \\
&\widetilde{\scrQ}_{1,3}, \cO_{2,2}, \cO_{1,4}, \widetilde{\scrQ}_{1,4}, \cO_{1,5}, \widetilde{\scrQ}_{1,5}, \widetilde{\scrQ}_{2,2}, \cO_{2,3}, \widetilde{\scrQ}_{2,3}, \cO_{2,4}, \Phi_1(D^b(\bfV_+)) \rangle .
\end{align*}
By mutating $\widetilde{\scrQ}_{0,2}, \widetilde{\scrQ}_{1,3}, \widetilde{\scrQ}_{1,1}$, and $\widetilde{\scrQ}_{2,2}$ one step to the right,
we obtain by $\widetilde{\scrQ}_{1,1} \cong \widetilde{\scrQ}^\vee_{1,2}$, \pref{lem:L5}, and \pref{lem:L6}
\begin{align*}
D^b(\bfV) = \langle &\cO_{1,1}, \scrQ_{0,2}, \cO_{0,3}, \widetilde{\scrQ}_{0,3}, \cO_{0,4}, \widetilde{\scrQ}_{0,4}, \cO_{1,2}, \widetilde{\scrS}^\vee_{1,2}, \widetilde{\scrQ}_{1,2}, \cO_{1,3} \\
&\cO_{2,2}, \scrQ_{1,3}, \cO_{1,4}, \widetilde{\scrQ}_{1,4}, \cO_{1,5}, \widetilde{\scrQ}_{1,5}, \cO_{2,3}, \widetilde{\scrS}^\vee_{2,3}, \widetilde{\scrQ}_{2,3}, \cO_{2,4}, \Phi_1(D^b(\bfV_+)) \rangle .
\end{align*}
By mutating $\cO_{1,2}$ and $\cO_{2,3}$ four steps to the left, we obtain by \pref{lem:L1}, \pref{lem:L2}, and \pref{lem:L6}
\begin{align*}
D^b(\bfV) = \langle &\cO_{1,1}, \scrQ_{0,2}, \cO_{1,2}, \cO_{0,3}, \scrQ_{0,3}, \cO_{0,4}, \widetilde{\scrQ}_{0,4}, \widetilde{\scrS}^\vee_{1,2}, \widetilde{\scrQ}_{1,2}, \cO_{1,3} \\
&\cO_{2,2}, \scrQ_{1,3}, \cO_{2,3}, \cO_{1,4}, \scrQ_{1,4}, \cO_{1,5}, \widetilde{\scrQ}_{1,5}, \widetilde{\scrS}^\vee_{2,3}, \widetilde{\scrQ}_{2,3}, \cO_{2,4}, \Phi_1(D^b(\bfV_+)) \rangle .
\end{align*}
One can easily see that $\widetilde{\scrS}^\vee_{1,2}$ is orthogonal to $\cO_{0,4}$ and $\widetilde{\scrQ}_{0,4}$ by \pref{lem:L4},
so that
\begin{align*}
D^b(\bfV) = \langle &\cO_{1,1}, \scrQ_{0,2}, \cO_{1,2}, \cO_{0,3}, \scrQ_{0,3}, \widetilde{\scrS}^\vee_{1,2}, \cO_{0,4}, \widetilde{\scrQ}_{0,4}, \widetilde{\scrQ}_{1,2}, \cO_{1,3} \\
&\cO_{2,2}, \scrQ_{1,3}, \cO_{2,3}, \cO_{1,4}, \scrQ_{1,4}, \widetilde{\scrS}^\vee_{2,3}, \cO_{1,5}, \widetilde{\scrQ}_{1,5}, \widetilde{\scrQ}_{2,3}, \cO_{2,4}, \Phi_1(D^b(\bfV_+)) \rangle .
\end{align*}
By mutating $\cO_{0,3}$ and $\cO_{1,4}$ two steps to the right,
$\cO_{1,3}$ and $\cO_{2,4}$ three steps to the left,
and then
$\cO_{0,4}$ and $\cO_{1,5}$ two steps to the right,
we obtain by \pref{lem:L5} and \pref{lem:L6}
\begin{align*}
D^b(\bfV) = \langle &\cO_{1,1}, \scrQ_{0,2}, \cO_{1,2}, \scrS_{0,3}, \scrS^\vee_{1,2}, \cO_{0,3}, \cO_{1,3}, \scrS_{0,4}, \scrS^\vee_{1,3}, \cO_{0,4} \\
&\cO_{2,2}, \scrQ_{1,3}, \cO_{2,3}, \scrS_{1,4}, \scrS^\vee_{2,3}, \cO_{1,4}, \cO_{2,4}, \scrS_{1,5}, \scrS^\vee_{2,4}, \cO_{1,5}, \Phi_1(D^b(\bfV_+)) \rangle .
\end{align*}
By mutating $\cO_{1,1}$ to the far right, and then $\Phi_1(D^b(\bfV_+))$ one step to the right, we obtain
\begin{align*}
D^b(\bfV) = \langle &\scrQ_{0,2}, \cO_{1,2}, \scrS_{0,3}, \scrS^\vee_{1,2}, \cO_{0,3}, \cO_{1,3}, \scrS_{0,4}, \scrS^\vee_{1,3}, \cO_{0,4}, \cO_{2,2} \\
&\scrQ_{1,3}, \cO_{2,3}, \scrS_{1,4}, \scrS^\vee_{2,3}, \cO_{1,4}, \cO_{2,4}, \scrS_{1,5}, \scrS^\vee_{2,4}, \cO_{1,5}, \cO_{3,3}, \Phi_2(D^b(\bfV_+)) \rangle ,
\end{align*}
where
\begin{align*}
\Phi_2 \coloneqq R_{\langle \cO_{3,3} \rangle} \circ \Phi_1.
\end{align*}
By \pref{lem:L2},
\pref{lem:L3},
and
\pref{lem:L4}, we obtain
\begin{align*}
D^b(\bfV) = \langle &\scrQ_{0,2}, \cO_{1,2}, \scrS^\vee_{1,2}, \cO_{2,2}, \scrS_{0,3}, \cO_{0,3}, \cO_{1,3}, \scrS^\vee_{1,3}, \scrQ_{1,3}, \cO_{2,3} \\
&\scrS^\vee_{2,3}, \cO_{3,3}, \scrS_{0,4}, \cO_{0,4}, \scrS_{1,4}, \cO_{1,4}, \cO_{2,4}, \scrS^\vee_{2,4}, \scrS_{1,5}, \cO_{1,5}, \Phi_2(D^b(\bfV_+)) \rangle.
\end{align*}
By mutating $\Phi_2(D^b(\bfV_+))$ ten steps to the left, and then last ten terms to the far left, we obtain
\begin{align*}
D^b(\bfV) = \langle &\scrS^\vee_{0,1}, \cO_{1,1}, \scrS_{-2,2}, \cO_{-2,2}, \scrS_{-1,2}, \cO_{-1,2}, \cO_{0,2}, \scrS^\vee_{0,2}, \scrS_{-1,3}, \cO_{-1,3} \\
&\scrQ_{0,2}, \cO_{1,2}, \scrS^\vee_{1,2}, \cO_{2,2}, \scrS_{0,3}, \cO_{0,3}, \cO_{1,3}, \scrS^\vee_{1,3}, \scrQ_{1,3}, \cO_{2,3}, \Phi_3(D^b(\bfV_+)) \rangle ,
\end{align*}
where
\begin{align*}
\Phi_3 \coloneqq L_{\langle \scrS^\vee_{2,3}, \cO_{3,3}, \scrS_{0,4}, \cO_{0,4}, \scrS_{1,4}, \cO_{1,4}, \cO_{2,4}, \scrS^\vee_{2,4}, \scrS_{1,5}, \cO_{1,5} \rangle} \circ \Phi_2.
\end{align*}
By \pref{lem:L3}, we obtain
\begin{align*}
D^b(\bfV) = \langle &\scrS^\vee_{0,1}, \cO_{1,1}, \scrS_{-2,2}, \cO_{-2,2}, \scrS_{-1,2}, \cO_{-1,2}, \cO_{0,2}, \scrS^\vee_{0,2}, \scrQ_{0,2}, \cO_{1,2} \\
&\scrS^\vee_{1,2}, \cO_{2,2}, \scrS_{-1,3}, \cO_{-1,3}, \scrS_{0,3}, \cO_{0,3}, \cO_{1,3}, \scrS^\vee_{1,3}, \scrQ_{1,3}, \cO_{2,3}, \Phi_3(D^b(\bfV_+)) \rangle .
\end{align*}
By mutating $\scrQ_{0,2}$ and $\scrQ_{1,3}$ two steps to the left, 
the first two terms to the far right,
and then $\Phi_3(D^b(\bfV_+))$ two steps to the right, 
we obtain by $\scrS^\vee_{0,0} \simeq \scrS_{1,0}$, \pref{lem:L4}, and \pref{lem:L6}
\begin{align} \label{eq:kSOD3}
\begin{split}
D^b(\bfV) = \langle &\scrS_{-2,2}, \cO_{-2,2}, \scrS_{-1,2}, \cO_{-1,2}, \scrS_{0,2}, \cO_{0,2}, \scrS_{1,2}, \cO_{1,2}, \scrS_{2,2}, \cO_{2,2} \\
&\scrS_{-1,3}, \cO_{-1,3}, \scrS_{0,3}, \cO_{0,3}, \scrS_{1,3}, \cO_{1,3}, \scrS_{2,3}, \cO_{2,3}, \scrS_{3,3}, \cO_{3,3}, \Phi_4(D^b(\bfV_+)) \rangle ,
\end{split}
\end{align}
where
\begin{align*}
\Phi_4 \coloneqq R_{\langle \scrS^\vee_{2,3}, \cO_{3,3} \rangle} \circ \Phi_3.
\end{align*}
By comparing \eqref{eq:kSOD3} with \eqref{eq:kSOD1},
we obtain a derived equivalence
\begin{align*}
\Phi \coloneqq \Phi_-^! \circ \Phi_4 \colon D^b(\bfV_+) \xrightarrow{\sim} D^b(\bfV_-),
\end{align*}
where
\begin{align*}
\Phi_-^!(-) \coloneqq (\varphi_-)_* \circ \left( (-) \otimes \cO_\bfV(-2H) \right)
\colon D^b(\bfV) \to D^b(\bfV_-)
\end{align*}
is the left adjoint functor of $\Phi_-$.

\section{Mukai flop}
For $n \ge 2$,
let $P$ and $Q$ be the maximal parabolic subgroups
of the simple Lie group of type $A_n$
associated with the crossed Dynkin diagrams
\dynkin{A}{x*.*}
and
\dynkin{A}{*.*x}.
The corresponding homogeneous spaces are the projective spaces
$\bfP = \bP V,$
$\bfQ = \bP V^\vee,$
and the partial flag variety $\bfF = \operatorname{F} \left( 1, n; V \right)$,
where $V$ is an $(n+1)$-dimensional vector space.
Since 
$
\omega_{\bfP} \cong \cO(-(n+1)h),
$
$
\omega_{\bfQ} \cong \cO(-(n+1)H),
$
and
$
\omega_{\bfF} \cong \cO(-nh-nH),
$
we have
$
\omega_{\bfV_-} \cong \cO_{\bfV_-},
$
$
\omega_{\bfV_+} \cong \cO_{\bfV_+},
$
and
$
\omega_{\bfV} \cong \cO(-(n-1)h-(n-1)H).
$

\begin{lem} \label{lm:mnExt1}
$\cO_{\bfF}(- i h + jH)$ and
$\cO_{\bfF}(- (i +1) h + (j - 1)H)$
are acyclic for $1 \leq j \leq n-1$ and $1 \leq i \leq n-j$.
\end{lem}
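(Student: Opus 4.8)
The plan is to reduce the assertion to the vanishing of the cohomology of a single line bundle on a projective space, by pushing the bundle in question forward along whichever of $\varpi_-$, $\varpi_+$ is convenient. Recall that $\bfF = \operatorname{F}(1, n; V)$ with $\dim V = n+1$, so the fibre of $\varpi_-$ over a line is the $\bP^{n-1}$ of hyperplanes containing it and the fibre of $\varpi_+$ over a hyperplane is its $\bP^{n-1}$ of lines; thus $\varpi_-$ and $\varpi_+$ are $\bP^{n-1}$-bundles whose relative $\cO(1)$ are $\cO_\bfF(H)$ and $\cO_\bfF(h)$ respectively, while $h$ (resp. $H$) is pulled back along $\varpi_-$ from $\bfP = \bP V$ (resp. along $\varpi_+$ from $\bfQ = \bP V^\vee$). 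The only input I will use is the standard vanishing $R^\bullet \pi_* \cO_Y(k\xi) = 0$ for a $\bP^m$-bundle $\pi \colon Y \to X$ with relative hyperplane class $\xi$ whenever $-m \le k \le -1$, which combined with the projection formula gives $\pi_* \lb \pi^* L \otimes \cO_Y(k\xi) \rb \simeq 0$ for any line bundle $L$ on $X$ and $-m \le k \le -1$; for the fibres of $\varpi_\pm$ the relevant interval is $-(n-1) \le k \le -1$, while for $\bfP \cong \bfQ \cong \bP^n$ it is $-n \le k \le -1$.

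For the first bundle $\cO_\bfF(-ih+jH)$ I would push forward along $\varpi_+$, for which $\cO_\bfF(jH)$ is pulled back from $\bfQ$ and $\cO_\bfF(-ih)$ is the $(-i)$-th tensor power of the relative $\cO(1)$; since $j \ge 1$ and $i \le n-j$ we have $1 \le i \le n-1$, so $(\varpi_+)_* \cO_\bfF(-ih+jH) \simeq \cO_\bfQ(jH) \otimes (\varpi_+)_* \cO_\bfF(-ih) \simeq 0$, hence $\bh \lb \cO_\bfF(-ih+jH) \rb \simeq 0$. For the second bundle $\cO_\bfF(-(i+1)h+(j-1)H)$ I would distinguish two cases. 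If $j \ge 2$, then $1 \le i+1 \le n-j+1 \le n-1$, and the same computation along $\varpi_+$ with $L = \cO_\bfQ((j-1)H)$ and $k = -(i+1)$ yields acyclicity. If $j = 1$, the bundle equals $\varpi_-^* \cO_\bfP(-(i+1)h)$, so $(\varpi_-)_* \cO_\bfF(-(i+1)h) \simeq \cO_\bfP(-(i+1)h)$, and since $1 \le i \le n-1$ we have $2 \le i+1 \le n$, whence $\bh \lb \cO_\bfP(-(i+1)h) \rb \simeq \operatorname{H}^\bullet \lb \bP^n, \cO(-(i+1)) \rb = 0$.

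The argument is essentially bookkeeping, so I do not anticipate a real obstacle; the only point requiring care is the choice of projection. Pushing the second bundle along $\varpi_+$ in every case would fail exactly when $j = 1$ and $i = n-1$, since then its $h$-exponent is $-n$, which lies outside the forbidden interval $-(n-1) \le k \le -1$ of the $\bP^{n-1}$-fibre; this is precisely why the case $j = 1$ must instead be pushed down to $\bfP$, where $-n$ still lies in the wider forbidden interval $-n \le k \le -1$ of $\bP^n$. Alternatively, both vanishings follow directly from the Borel-Bott-Weil theorem on $\bfF$, the weights attached to $\cO_\bfF(-ih+jH)$ and $\cO_\bfF(-(i+1)h+(j-1)H)$ being singular in the stated ranges; I prefer the fibration argument here because the projective-bundle structures and the cohomology of $\cO(k)$ on $\bP^m$ are already the working tools of this section.
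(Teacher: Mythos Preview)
Your argument is correct and is essentially the same as the paper's: both push forward along $\varpi_+$ using the vanishing of $R^\bullet(\varpi_+)_*\cO_\bfF(kh)$ for $-(n-1)\le k\le -1$, and both isolate the one case this misses, namely $\cO_\bfF(-nh)$ when $(i,j)=(n-1,1)$, which is handled as the pull-back of $\cO_{\bP^n}(-n)$. The only cosmetic difference is that you treat the whole range $j=1$ of the second bundle via $\varpi_-$, whereas the paper singles out just the boundary point $(i,j)=(n-1,1)$; either way the computation is the same.
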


\begin{proof}
Since
$j-n \leq -i \leq -1$
and
$j-n-1 \leq -i-1 \leq -2$,
the derived push-foward of
$\cO_{\bfF}(- i h + jH)$
and
$\cO_{\bfF}(- (i + 1) h + (j - 1)H)$
vanish by \cite[Exercise III.8.4]{Har}
unless
$i = n - 1$ and $j = 1$,
in which case the acyclicity of
$\cO_{\bfF}(-nh)$
is obvious.
\end{proof}

\begin{lem} \label{lm:mnExt2}
$
 \bhom_{\cO_\bfV} \left( \cO_{\bfF}(i h - jH), \cO_{\bfF} \right) \simeq 0
$
for $1 \leq j \leq n-1$ and $1 \leq i \leq n-j$.
\end{lem}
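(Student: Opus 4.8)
The plan is to proceed exactly as in the preceding $\bhom_{\cO_\bfV}$-computations (for instance in the proofs of \pref{lm:mnExt1}, \pref{lem:L2}, and \pref{lem:L6}): resolve the first argument by a two-term complex of line bundles pulled back from $\bfF$, and reduce the statement to the acyclicities already recorded in \pref{lm:mnExt1}.

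First I would use that the zero section $\iota\colon\bfF\hookrightarrow\bfV$ is an effective Cartier divisor whose conormal bundle is $\cO_\bfF(h+H)$, as is standard for the total space of the line bundle $\cO_\bfF(-h-H)$ over $\bfF$; equivalently, $\cO_\bfV(-\bfF)$ is the pullback to $\bfV$ of $\cO_\bfF(h+H)$. This gives the Koszul resolution
\begin{align}
\iota_*\cO_\bfF(ih-jH)\simeq\left\{\cO_\bfV\big((i+1)h-(j-1)H\big)\to\cO_\bfV(ih-jH)\right\},
\end{align}
whose differential is multiplication by the section cutting out $\bfF$ and hence restricts to $0$ along $\bfF$. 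I would then apply $\bhom_{\cO_\bfV}(-,\cO_\bfF)$, use $\bhom_{\cO_\bfV}\big(\cO_\bfV(D),\cO_\bfF\big)\simeq\cO_\bfF(-D)$, and note that the induced differential vanishes, so that
\begin{align}
\bhom_{\cO_\bfV}\big(\cO_\bfF(ih-jH),\cO_\bfF\big)\simeq\mathbf{h}\left(\left\{\cO_\bfF(-ih+jH)\to\cO_\bfF\big(-(i+1)h+(j-1)H\big)\right\}\right).
\end{align}
For $1\le j\le n-1$ and $1\le i\le n-j$, both $\cO_\bfF(-ih+jH)$ and $\cO_\bfF(-(i+1)h+(j-1)H)$ are acyclic by \pref{lm:mnExt1}, so the right-hand side vanishes, which is the claim.

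I do not expect a genuine obstacle: the index ranges in \pref{lm:mnExt1} agree verbatim with those in the statement, so once the Koszul reduction is in place the vanishing is immediate. The only point deserving a line of care is the identification of the conormal bundle of the zero section with $\cO_\bfF(h+H)$; after that the differential of the resolution drops out on restriction to $\bfF$, and the two surviving line-bundle cohomologies on $\bfF$ are killed by \pref{lm:mnExt1}.
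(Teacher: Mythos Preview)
Your proposal is correct and follows essentially the same route as the paper: resolve $\iota_*\cO_\bfF(ih-jH)$ by the two-term Koszul complex $\{\cO_\bfV((i+1)h-(j-1)H)\to\cO_\bfV(ih-jH)\}$, apply $\bhom_{\cO_\bfV}(-,\cO_\bfF)$, and invoke \pref{lm:mnExt1} for the acyclicity of the two resulting line bundles on $\bfF$. Your remark that the induced differential vanishes is correct but unnecessary, since the hypercohomology of a two-term complex with both terms acyclic vanishes regardless of the differential.
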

\begin{proof}
We have
\begin{align*}
\bhom_{\cO_\bfV} \left( \cO_{\bfF}(i h -jH), \cO_{\bfF} \right)
&\simeq
\mathbf{h} \left( \left\{ \cO_{\bfF}(-i h + jH) \to \cO_{\bfF}(-(i + 1)h + (j - 1)H) \right\} \right),
\end{align*}
which vanishes
by \prettyref{lm:mnExt1}.
\end{proof}

Recall from \cite{Bei} that 
\begin{align} \label{eq:mnEC1}
D^b(\bfP) = \langle \cO_{\bfP}, \cO_\bfP(h), \cdots, \cO_\bfP(nh) \rangle
\end{align}
and
\begin{align} \label{eq:mnEC2}
D^b(\bfQ) = \langle \cO_{\bfQ}, \cO_\bfQ(H), \cdots, \cO_\bfQ(nH) \rangle.
\end{align}

Since $\varphi_\pm$ are blow-ups
along the zero-sections,
it follows from \cite{Orl} that 
\begin{align} \label{eq:mnSOD1}
D^b(\bfV) = \langle \iota_* \varpi_-^* D^b(\bfP), \cdots, \iota_* \varpi_-^* D^b(\bfP) \otimes \cO_{\bfV}((n-2)H), \Phi_-(D^b(\bfV_-)) \rangle
\end{align}
and
\begin{align} \label{eq:mnSOD2}
D^b(\bfV) = \langle \iota_* \varpi_+^* D^b(\bfQ), \cdots, \iota_* \varpi_+^* D^b(\bfQ) \otimes \cO_{\bfV}((n-2)h), \Phi_+(D^b(\bfV_+)) \rangle,
\end{align}
where
\begin{align*}
\Phi_- \coloneqq \left( (-) \otimes \cO_\bfV((n-1)H) \right) \circ \varphi_-^* \colon D^b(\bfV_-) \to D^b(\bfV)
\end{align*}
and
\begin{align*}
\Phi_+ \coloneqq \left( (-) \otimes \cO_\bfV((n-1)h) \right) \circ \varphi_+^* \colon D^b(\bfV_+) \to D^b(\bfV).
\end{align*}
We write $\cO_{i,j} \coloneqq \cO_\bfF(i h + j H)$.
\pref{eq:mnEC1} and \pref{eq:mnSOD1} give
a semiorthogonal decomposition of the form
\begin{align*}
 D^b(\bfV) = \langle \cA_0, \Phi_-(D^b(\bfV_-)) \rangle
\end{align*}
where $\cA_0$ is given by
\begin{align} \label{eq:mnEC0}
\begin{matrix}
 \cO_{0,0} & \cO_{1,0} & \cdots & \cO_{n-2,0} & \cO_{n-1,0} & \cO_{n,0} \\
 & \cO_{1,1} & \cdots & \cO_{n-2,1} & \cO_{n-1,1} & \cO_{n,1} & \cO_{n+1,1} \\
 && \ddots & \vdots & \vdots & \vdots & \vdots & \ddots \\
 &&& \cO_{n-2,n-2} & \cO_{n-1,n-2} & \cO_{n,n-2} & \cO_{n+1,n-2} & \cdots & \cO_{2n-2,n-2}.
\end{matrix}
\end{align}
Note from \prettyref{lm:mnExt2} that
there are no morphisms from right to left in \pref{eq:mnEC0}. 
Since $\omega_\bfV \cong \cO_{-(n-1),-(n-1)}$,
by mutating first
\begin{align*}
\begin{matrix}
 \cO_{0,0} & \cO_{1,0} & \cdots & \cO_{n-2,0} \\
 & \cO_{1,1} & \cdots & \cO_{n-2,1} \\
 && \ddots & \vdots \\
 &&& \cO_{n-2,n-2}
\end{matrix}
\end{align*}
to the far right, and then
$\Phi_-(D^b(\bfV_-))$
to the far right, we obtain
\begin{align*}
 D^b(\bfV) = \langle \cA_1, \Phi_1(D^b(\bfV_-)) \rangle,
\end{align*}
where
\begin{align*}
\Phi_1(D^b(\bfV_-)) \coloneqq 
R_{\langle \cO_{n-1,n-1}, \cdots, \cO_{2n-3,2n-3} \rangle} \circ \Phi_-
\end{align*}
and $\cA_1$ is given by
\begin{align*}
\begin{matrix}
 \cO_{n-1,0} & \cO_{n,0} \\
 \cO_{n-1,1} & \cO_{n,1} & \cO_{n+1,1} \\
 \vdots & \vdots & \vdots & \ddots \\
 \cO_{n-1,n-2} & \cO_{n,n-2} & \cO_{n+1,n-2} & \cdots & \cO_{2n-3,n-2} & \cO_{2n-2,n-2} \\
 \cO_{n-1,n-1} & \cO_{n,n-1} & \cO_{n+1,n-1} & \cdots & \cO_{2n-3,n-1} \\
 & \cO_{n,n} & \cO_{n+1,n} & \cdots & \cO_{2n-3,n} \\
 && \cO_{n+1,n+1} & \cdots & \cO_{2n-3,n+1} \\
 &&& \ddots & \vdots \\
 &&&& \cO_{2n-3,2n-3}.
\end{matrix}
\end{align*}
By mutating $\Phi_1(D^b(\bfV_-))$ one step to the left,
and then $\cO_{2n-2,n-2}$ to the far left, we obtain
\begin{align} \label{eq:mnSOD3}
 D^b(\bfV) = \langle \cA_2, \Phi_2(D^b(\bfV_-)) \rangle,
\end{align}
where
\begin{align*}
 \Phi_2(D^b(\bfV_-)) \coloneqq L_{\cO_{2n-2,n-2}} \circ \Phi_1
\end{align*}
and $\cA_2$ is given by
\begin{align*}
\begin{matrix}
 \cO_{n-1,-1} \\
 \cO_{n-1,0} & \cO_{n,0} \\
 \cO_{n-1,1} & \cO_{n,1} & \cO_{n+1,1} \\
 \vdots & \vdots & \vdots & \ddots \\
 \cO_{n-1,n-2} & \cO_{n,n-2} & \cO_{n+1,n-2} & \cdots & \cO_{2n-3,n-2} \\
 \cO_{n-1,n-1} & \cO_{n,n-1} & \cO_{n+1,n-1} & \cdots & \cO_{2n-3,n-1} \\
 & \cO_{n,n} & \cO_{n+1,n} & \cdots & \cO_{2n-3,n} \\
 && \cO_{n+1,n+1} & \cdots & \cO_{2n-3,n+1} \\
 &&& \ddots & \vdots \\
 &&&& \cO_{2n-3,2n-3}.
\end{matrix}
\end{align*}
By comparing \pref{eq:mnSOD3}
with \pref{eq:mnEC2} and \pref{eq:mnSOD2},
we obtain a derived equivalence
\begin{align*} 
\Phi \coloneqq (\varphi_+)_* \circ ((-) \otimes \cO_{-(2n-2),0}) \circ \Phi_2
 \colon D^b(\bfV_-) \xrightarrow{\sim} D^b(\bfV_+).
\end{align*}

\section{Standard flop}
For $n \ge 1$,
let $P$ and $Q$ be the maximal parabolic subgroups of the semisimple Lie group 
$G = \operatorname{SL}(V) \times  \operatorname{SL}(V^\vee)$
associated with the crossed Dynkin diagram
$\dynkin{A}{x*.*}
\oplus
\dynkin{A}{*.**}$
and
$\dynkin{A}{*.**}
\oplus
\dynkin{A}{*.*x.}.$
The corresponding homogeneous spaces are the projective spaces
$\bfP = \bP V$,
$\bfQ = \bP V^\vee,$
and their product $\bfF = \bP V \times \bP V^\vee$.
Since 
$\omega_{\bfP} \cong \cO(-(n+1)h)$,
$\omega_{\bfQ} \cong \cO(-(n+1)H)$,
and
$\omega_{\bfF} \cong \cO(-(n+1)h-(n+1)H)$,
we have
$\omega_{\bfV_-} \cong \cO_{\bfV_-}$,
$\omega_{\bfV_+} \cong \cO_{\bfV_+}$,
and
$\omega_{\bfV} \cong \cO(-nh-nH)$.

\begin{lem} \label{lm:stnExt2}
$\bhom_{\cO_\bfV} \left( \cO_{\bfF}(i h - jH), \cO_{\bfF} \right) \simeq 0$
for $1 \leq j \leq n-1$ and $1 \leq i \leq n-j$.
\end{lem}

\begin{proof}
We have
\begin{align*}
\bhom_{\cO_\bfV} \left( \cO_{\bfF}(i h -jH), \cO_{\bfF} \right)
&\simeq
\mathbf{h} \left( \left\{ \cO_{\bfF}(-i h + jH) \to \cO_{\bfF}(-(i + 1)h + (j - 1)H) \right\} \right),
\end{align*}
which vanishes
for $1 \leq i \leq n-j \le n - 1$.
\end{proof}

It follows from \cite{Orl} that 
\begin{align} \label{eq:stnSOD1}
D^b(\bfV) = \langle \iota_* \varpi_-^* D^b(\bfP), \cdots, \iota_* \varpi_-^*D^b(\bfP) \otimes \cO((n-1)(h+H)), \Phi_-(D^b(\bfV_-)) \rangle
\end{align}
and
\begin{align} \label{eq:stnSOD2}
D^b(\bfV) = \langle \iota_* \varpi_+^* D^b(\bfQ), \cdots, \iota_* \varpi_+^* D^b(\bfQ) \otimes \cO((n-1)(h+H)), \Phi_+(D^b(\bfV_+)) \rangle,
\end{align}
where
\begin{align*}
\Phi_- \coloneqq (-) \otimes \cO_\bfV(n(h+H)) \circ \varphi_-^* \colon D^b(\bfV_-) \to D^b(\bfV)
\end{align*}
and
\begin{align*}
\Phi_+ \coloneqq (-) \otimes \cO_\bfV(n(h+H)) \circ \varphi_+^* \colon D^b(\bfV_+) \to D^b(\bfV).
\end{align*}
We write $\cO_{i,j} \coloneqq \cO_\bfF(i h + j H)$.
\prettyref{eq:mnEC1} and \prettyref{eq:stnSOD1} give
a semiorthogonal decomposition of the form
\begin{align*}
 D^b(\bfV) = \langle \cA_0, \Phi_-(D^b(\bfV_-)) \rangle
\end{align*}
where $\cA_0$ is given by
\begin{align} \label{eq:stnEC0}
\begin{matrix}
 \cO_{0,0} & \cO_{1,0} & \cdots & \cO_{n-2,0} & \cO_{n-1,0} & \cO_{n,0} \\
 & \cO_{1,1} & \cdots & \cO_{n-2,1} & \cO_{n-1,1} & \cO_{n,1} & \cO_{n+1,1} \\
 && \ddots & \vdots & \vdots & \vdots & \vdots & \ddots \\
 &&& \cO_{n-2,n-2} & \cO_{n-1,n-2} & \cO_{n,n-2} & \cO_{n+1,n-2} & \cdots & \cO_{2n-2,n-2} \\
 &&&& \cO_{n-1,n-1} & \cO_{n,n-1} & \cO_{n+1,n-1}  & \cdots & \cO_{2n-2,n-1} & \cO_{2n-1,n-1}.
\end{matrix}
\end{align}
Note from \prettyref{lm:stnExt2} that
there are no morphisms from right to left in \pref{eq:stnEC0}. 
Since $\omega_\bfV \cong \cO_\bfV(-nh-nH)$, by mutating first
\begin{align*}
\begin{matrix}
 \cO_{0,0} & \cO_{1,0} & \cdots & \cO_{n-2,0} \\
 & \cO_{1,1} & \cdots & \cO_{n-2,1} \\
 && \ddots & \vdots \\
 &&& \cO_{n-2,n-2}
\end{matrix}
\end{align*}
to the far right, and then
$\Phi_-(D^b(\bfV_-))$
to the far right, we obtain
\begin{align*}
 D^b(\bfV) = \langle \cA_1, \Phi_1(D^b(\bfV_-)) \rangle,
\end{align*}
where
\begin{align*}
\Phi_1(D^b(\bfV_-)) \coloneqq 
R_{\langle \cO_{n,n}, \cdots, \cO_{2n-2,2n-2} \rangle} \circ \Phi_-
\end{align*}
and $\cA_1$ is given by
\begin{align*}
\begin{matrix}
 \cO_{n-1,0} & \cO_{n,0} \\
 \cO_{n-1,1} & \cO_{n,1} & \cO_{n+1,1} \\
 \vdots & \vdots & \vdots & \ddots \\
 \cO_{n-1,n-1} & \cO_{n,n-1} & \cO_{n+1,n-1} & \cdots & \cO_{2n-2,n-1} & \cO_{2n-1,n-1} \\
 & \cO_{n,n} & \cO_{n+1,n} & \cdots & \cO_{2n-2,n} \\
 && \cO_{n+1,n+1} & \cdots & \cO_{2n-2,n+1} \\
 &&& \ddots & \vdots \\
 &&&& \cO_{2n-2,2n-2}.
\end{matrix}
\end{align*}
By mutating $\Phi_1(D^b(\bfV_-))$ one step to the left,
and then $\cO_{2n-1,n-1}$ to the far left, we obtain
\begin{align} \label{eq:stnSOD3}
 D^b(\bfV) = \langle \cA_2, \Phi_2(D^b(\bfV_-)) \rangle,
\end{align}
where
\begin{align*}
 \Phi_2(D^b(\bfV_-)) \coloneqq L_{\cO_{2n-1,n-1}} \circ \Phi_1
\end{align*}
and $\cA_2$ is given by
\begin{align*}
\begin{matrix}
 \cO_{n-1,-1} \\
 \cO_{n-1,0} & \cO_{n,0} \\
 \cO_{n-1,1} & \cO_{n,1} & \cO_{n+1,1} \\
 \vdots & \vdots & \vdots & \ddots \\
 \cO_{n-1,n-1} & \cO_{n,n-1} & \cO_{n+1,n-1} & \cdots & \cO_{2n-2,n-1} \\
 & \cO_{n,n} & \cO_{n+1,n} & \cdots & \cO_{2n-2,n} \\
 && \cO_{n+1,n+1} & \cdots & \cO_{2n-2,n+1} \\
 &&& \ddots & \vdots \\
 &&&& \cO_{2n-2,2n-2}.
\end{matrix}
\end{align*}
By comparing \pref{eq:stnSOD3}
with \pref{eq:mnEC2} and \pref{eq:stnSOD2},
we obtain a derived equivalence
\begin{align*} 
\Phi \coloneqq (\varphi_+)_* \circ ((-) \otimes \cO_{-(2n-1),0}) \circ \Phi_2
 \colon D^b(\bfV_-) \xrightarrow{\sim} D^b(\bfV_+).
\end{align*}

\begin{rmk}
The way of presenting our proof in Section 4 and 5 is called chess game by some authors \cite{JL, Tho}.
\end{rmk}


\end{document}